\newsavebox{\largestimage}
\numberwithin{figure}{section}
\tikzset{snake it/.style={decorate, decoration=snake}}
\newcounter{jkcomment}
\newcounter{dccomment}
\renewcommand{\@todonotes@drawMarginNoteWithLine}{%
  \begin{tikzpicture}[remember picture, overlay, baseline=-0.75ex]%
    \node [coordinate] (inText) {};%
  \end{tikzpicture}%
  \marginnote[{
    \@todonotes@drawMarginNote%
    \@todonotes@drawLineToLeftMargin%
  }]{
    \@todonotes@drawMarginNote%
    \@todonotes@drawLineToRightMargin%
  }%
}
\newcommand{\blank}{{-}} 
\DeclareMathOperator{\Diff}{Diff}
\DeclareMathOperator{\dive}{div}
\DeclareMathOperator{\expo}{exp}
\DeclareMathOperator{\Iso}{Isom}
\DeclareMathOperator{\kernel}{Ker}
\DeclareMathOperator{\Mod}{\mathcal{M}}
\DeclareMathOperator{\Riem}{\mathcal{R}}
\DeclareMathOperator{\vol}{vol}
\providecommand*{\dop}%
{\@ifnextchar^{\@diffOperator}{\@diffOperator^{}}}
\def\@diffOperator^#1{%
  \mathop{\mathrm{\mathstrut d}}%
  \nolimits^{#1}\gobblespace}
\providecommand*{\Dop}%
{\@ifnextchar^{\@DiffOperator}{\@DiffOperator^{}}}
\def\@DiffOperator^#1{%
  \mathop{\mathrm{\mathstrut D}}%
  \nolimits^{#1}\gobblespace}
\def\gobblespace{%
  \futurelet\diffarg\opspace}
\def\opspace{%
  \let\DiffSpace\!%
  \ifx\diffarg(%
  \let\DiffSpace\relax
  \else
  \ifx\diffarg[%
  \let\DiffSpace\relax
  \else
  \ifx\diffarg\{%
  \let\DiffSpace\relax
  \fi\fi\fi\DiffSpace}
\newtheorem{thm}{Theorem}[section]
\newtheorem{lemma}[thm]{Lemma}
\newtheorem{cor}[thm]{Corollary}
\newtheorem{proposition}[thm]{Proposition}
\theoremstyle{definition}	
\newtheorem{remark}[thm]{Remark}
\newtheorem{dfn}[thm]{Definition}
\newtheorem*{conv}{Conventions}
\newtheorem*{ack}{Acknowledgements}
\newtheoremstyle{TheoremNoNum}
{.5\baselineskip}{.5\baselineskip}              
{\itshape}                      
{}                              
{\bfseries}                     
{.}                             
{ }                             
{\thmname{#1}\thmnote{ \bfseries #3}}
\theoremstyle{TheoremNoNum}
\newaliascnt{slicethm}{theorem}
\newtheorem{slicethm}[slicethm]{Slice Theorem}
\providecommand*{\slicethmautorefname}{Slice Theorem\@gobble}
\newaliascnt{tubneighthm}{theorem}
\newtheorem{tubneighthm}[tubneighthm]{Theorem B}
\providecommand*{\tubneighthmautorefname}{Theorem B\@gobble}
\newtheoremstyle{TheoremNum}
{\topsep}{\topsep}              
{\itshape}                      
{}                              
{\bfseries}                     
{.}                             
{}                             
{\thmname{#1}\thmnote{ \bfseries #3}}
\theoremstyle{TheoremNum}
\numberwithin{equation}{section}
\newcommand{\N}{\mathbb{N}}	 
\newcommand{\R}{\mathbb{R}} 
\author[D.~Corro]{Diego Corro$^*$}
\address[D.~CORRO]{Institut f\"ur Algebra und Geometrie, Karlsruher Institut f\"ur Technologie (KIT), Karlsruhe, Germany.}
\email{\href{mailto:diego.corro@partner.kit.edu}
  {diego.corro@partner.kit.edu}}
\urladdr{\url{http://www.math.kit.edu/iag5/~corro/en}}
 \thanks{$^*$Supported by the DFG (281869850, RTG 2229 ``Asymptotic Invariants and Limits of Groups and Spaces").}
\author[J.-B.~Korda\ss]{Jan-Bernhard Korda\ss$^*$}
\address[J.-B.~KORDA\ss]{Institut f\"ur Algebra und Geometrie, Karlsruher Institut f\"ur Technologie (KIT), Karlsruhe, Germany.}
\curraddr{Département de mathématiques, Université de Fribourg, Switzerland.}
\email{\href{mailto:kordass@kit.edu}
  {kordass@kit.edu}}
\urladdr{\url{http://www.math.kit.edu/iag5/~kordass/en}}
\title[EXISTENCE SLICES SPACE OF RIEMANNIAN METRICS]{SHORT SURVEY ON THE EXISTENCE OF SLICES FOR THE SPACE OF RIEMANNIAN METRICS}
\date{\today}
\subjclass[2010]{53C, 53C10, 58D17}
\keywords{Slice Theorem, Moduli space of Riemannian metrics}
\begin{document}
\overfullrule=5pt
\begin{abstract}
  We review the well-known slice theorem of Ebin for the action of the diffeomorphism group on the space of Riemannian metrics of a closed manifold.
  We present advances in the study of the spaces of Riemannian metrics, and produce a more concise proof for the existence of slices.
\end{abstract}

\maketitle


\section{Introduction}



The presence of a group action on a smooth manifold has been a successful tool in the study of the topology of manifolds, as well as their geometry (cf.\ \cite{AlexandrinoBettiol,Bredon,Kobayashi}).
In \cite{Palais1961}, Palais showed that given certain topological conditions on the action of the group, one could integrate the normal bundle of an orbit to obtain an equivariant tubular neighborhood called a \emph{slice}.
A result guaranteeing its existence is referred to as a \emph{slice theorem}.
Such a theorem allows us to study the action of a Lie group $G$ on $M$ locally, as well as the quotient map $q \colon M \to M/G$.
The required topological condition is automatically satisfied by any compact group, and slices have played an important role in the study of such actions.

Furthermore, the proof of the slice theorem, for the case when the group $G$ and the manifold $M$ are finite-dimensional, is rather geometric.
It has been exploited in the so called \emph{Symmetry Program}, to study and construct, metrics satisfying lower curvature bounds on a given smooth manifold, with a prescribed group contained in the symmetry group (see \cite{Grove2000, Grove2018}).

When attempting to reproduce this proof for the case when the dimensions of the acting group $\mathcal G$ and the manifold $\mathcal M$ are both infinite, a series of technical obstructions arise.
Each of these technical points has to be addressed in order to show the existence of a slice.
This is the core of Ebin's work in \cite{Ebin1968} for  the action of the group of diffeomorphisms $\Diff(M)$ on the space of Riemannian metrics $\Riem(M)$ for a closed smooth manifold $M$.
The action is by pulling back Riemannian metrics along diffeomorphisms.
To view it as a left action, we set $\phi \cdot g := (\phi^{-1})^{*}g$ for $\phi \in \Diff(M)$ and $g \in \mathcal R(M)$.
Namely, in \cite{Ebin1968} the following theorem is proved.

\begin{slicethm}\label{thm:slice-thm}
  Let $M$ be a closed smooth manifold and consider the Fré\-chet Lie group of diffeomorphisms $\Diff(M)$.
  Given a Riemannian metric $\gamma \in \Riem(M)$, there exists a submanifold $S_\gamma$ containing $\gamma$ such that the following hold:
  \begin{enumerate}[label=(\roman*)]
  \item For any $\phi \in \Diff(M)_\gamma$, we have $\phi \cdot S_\gamma \subset S_\gamma$.
  \item If $\phi \in \Diff(M)$ is a diffeomorphism such that $\phi \cdot S_\gamma \cap S_\gamma\neq \emptyset$, then $\phi \in \Diff(M)_\gamma$.
  \item There exists an open neighborhood $U$ of the identity right-coset in $\Diff(M)/\Diff(M)_\gamma$, and a cross-section $\chi\colon U\to \Diff(M)$, such that the map $F\colon U\times S_\gamma\to \Riem(M)$ given by 
    \[
      F(u,s) = \chi(u) \cdot s,
    \]
    is a homeomorphism onto an open neighborhood of $\gamma$ in $\mathcal R(M)$.
  \end{enumerate}
\end{slicethm}

The slice theorem has acquired recent relevance in Riemannian geometry regarding the study of the \emph{moduli space} $\Mod(M)$ of Riemannian structures on $M$.
This space is the orbit space $\Riem(M)/\Diff(M)$, and parametrizes possible Riemannian geometries on $M$ up to isometry.
Since the proof of the~\autoref{thm:slice-thm} is by local means, the slice theorem holds for any open $\Diff(M)$-invariant submanifold of $\Riem(M)$.
Particular examples of such invariant open manifolds are the subsets of Riemannian metrics satisfying some strict lower (resp.\ upper) curvature bound.
For example, we may consider the space of Riemannian metrics with positive sectional curvature $\Riem^{\sec >0}(M)$.
The orbit space $\Mod^{\sec >0}(M) := \Riem^{\sec >0}(M)/\Diff(M)$ describes isometry classes of metrics with positive sectional curvature on $M$.
Moreover, we one can consider the action of closed subgroups of $\Diff(M)$, such as $\Diff^0(M)$, the subgroup of diffeomorphism homotopic to the identity.
A particular example for a space of metrics with an upper curvature bound is the Teichmüller space (cf.\ \cite{FarrellSocar2016}).


Ebin's approach to prove the~\autoref{thm:slice-thm} is as follows.
He first considers a fixed Riemannian metric and a fixed volume element on $M$.
Then he fixes a degree $s \in \N \cup \{ \infty \}$ of differentiability, and defines an inner product $(\blank,\blank)_s$ on the space of sections $\Gamma^s (S^2 T^\ast M)$.
The downside is that the space $\Gamma^s (S^2 T^\ast M)$ is not complete with respect to this inner product.
Completing it yields a Sobolev space $H^s(S^2 T^\ast M)$.
Furthermore, the topology of this Sobolev space does not depend on the choices of the Riemannian metric, and the volume element of $M$ (see \cite[Sec.~3]{Ebin1968}).
Ebin then proceeds to show the existence of a slice for the action of the group of diffeomorphisms on this Sobolev space. 


For $M$ a closed smooth manifold, the study of a Riemannian structure for the space $\Riem(M)$ has greatly advanced in the last 50 years.
Namely, Freed and Groisser in \cite{FreedGroisser1989}, and later  Gil-Medrano and Michor in \cite{Gil-MedranoMichor1991} have studied the space $\Riem (M)$ with a canonical non-complete Riemannian structure, given by an $L^2$-metric (cf.\ \cite{KrieglMichor}).
They study the Levi-Civita connection of such a metric, and show the existence of an exponential map.

Using these advances we present an alternative proof of the~\autoref{thm:slice-thm}. 

\begin{remark}
The original result in \cite{Ebin1968} considers only orientable manifolds. In the present proof we believe this hypothesis is not necessary.
\end{remark}

The proof we present does not rely on the technical work done in the setting of Sobolev spaces in \cite{Ebin1968}.
Furthermore, the advances in the study  of topological group actions gives a clearer picture of the construction used in the proof of the slice.
This allows to describe up to homeomorphism a neighborhood of an orbit of the action of $\Diff(M)$ on $\Riem(M)$.
We observe that the isotropy group of a Riemannian metric $\gamma \in \Riem(M)$ is the group of isometries $\Iso(\gamma)$ of $\gamma$.
When $M$ is compact, the group $\Iso(\gamma)$ is a compact Lie group (see \cite{MeyersSteenrod1939}).

\begin{tubneighthm}\label{thm:tubular-nbhd-thm}
  Let $M$ be a closed smooth manifold, and consider $\Diff(M)$ acting on $\Riem(M)$.
  For a fixed Riemannian metric $\gamma$ let $S_\gamma$ be the slice through $\gamma$.
  Then a closed neighborhood of the orbit $\Diff(M) (\gamma) \subset \Riem(M)$ is homeomorphic to 
  \[
    \Diff(M) \times_{\Iso(\gamma)} S_\gamma
  \]
\end{tubneighthm}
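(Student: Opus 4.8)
The plan is to realize the twisted product through the obvious map and to verify that it is a homeomorphism onto a neighborhood of the orbit, deducing everything from the three conclusions of the Slice Theorem. Write $G = \Diff(M)$ and $H = \Iso(\gamma) = \Diff(M)_\gamma$, and recall that $H$ is a compact Lie group. By property (i) the $H$-action on $\Riem(M)$ preserves $S_\gamma$, so the right $H$-action $(\phi, s)\cdot h = (\phi h, h^{-1}\cdot s)$ on $G\times S_\gamma$ is defined and $G\times_H S_\gamma$ makes sense, with classes denoted $[\phi, s]$. First I would introduce
\[
  \Theta\colon G\times_H S_\gamma\longrightarrow \Riem(M), \qquad \Theta([\phi, s]) = \phi\cdot s .
\]
It is well defined because $(\phi h)\cdot(h^{-1}\cdot s) = \phi\cdot s$, and it is continuous by the universal property of the quotient together with continuity of the $G$-action.

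Next I would show that $\Theta$ is injective, and here properties (i) and (ii) enter. Suppose $\phi\cdot s = \phi'\cdot s'$ with $s, s'\in S_\gamma$, and set $\rho = (\phi')^{-1}\phi$. Then $\rho\cdot s = s'\in S_\gamma$, so $\rho\cdot S_\gamma\cap S_\gamma\neq\emptyset$ and property (ii) forces $\rho\in H$. Writing $h = \rho$ we obtain $s' = h\cdot s$ and $\phi = \phi' h$, hence
\[
  [\phi, s] = [\phi' h, s] = [\phi', h\cdot s] = [\phi', s'],
\]
so the two classes coincide.

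The core step is the local model, built from property (iii). That property supplies an open neighborhood $U$ of the identity coset in $G/H$ and a cross-section $\chi\colon U\to G$ for which $F(u, s) = \chi(u)\cdot s$ is a homeomorphism of $U\times S_\gamma$ onto an open neighborhood of $\gamma$. Let $\pi\colon G\to G/H$ be the quotient map and set $V = \{[\phi, s] : \pi(\phi)\in U\}$, an open subset of $G\times_H S_\gamma$. The cross-section trivializes $\pi^{-1}(U)$, so each class in $V$ has a unique representative of the form $(\chi(u), s)$, yielding a homeomorphism $V\cong U\times S_\gamma$ under which $\Theta|_V$ is precisely $F$; thus $\Theta|_V$ is a homeomorphism onto an open neighborhood of $\gamma$. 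To cover the whole orbit I would use equivariance: $\Theta$ intertwines the left action $\psi\cdot[\phi, s] = [\psi\phi, s]$ with the $G$-action on $\Riem(M)$. For an arbitrary class $[\phi_0, s_0]$ we have $\phi_0^{-1}\cdot[\phi_0, s_0]\in V$, since $\pi(e)\in U$; hence $[\phi_0, s_0]\in\phi_0\cdot V$, and on this translate $\Theta$ equals the action of $\phi_0$ composed with $\Theta|_V$ composed with the action of $\phi_0^{-1}$, a composite of homeomorphisms. Therefore $\Theta$ is a local homeomorphism, in particular open, and being continuous and injective it is a homeomorphism onto its image $\bigcup_{\phi\in G}\phi\cdot\Theta(V)$, a $G$-invariant neighborhood of the orbit $G\cdot\gamma$. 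Taking $S_\gamma$ to be the closed slice makes this neighborhood closed, as in the statement.

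The main obstacle is not any single estimate but guaranteeing that the twisted product behaves well in the infinite-dimensional Fréchet setting: one must know that $G\times_H S_\gamma$ is Hausdorff, that the quotient map $G\times S_\gamma\to G\times_H S_\gamma$ is open, and that $G\to G/H$ is locally trivial near the identity coset. Compactness of $H$ is exactly what secures the first two, by making the $H$-action on $G\times S_\gamma$ proper, while the cross-section $\chi$ from property (iii) furnishes the local trivialization. Once these structural facts are in place the argument above is formal, and the only substantive inputs are the three slice properties.
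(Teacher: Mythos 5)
Your proposal is correct and follows essentially the same route as the paper: both define the natural map from $\Diff(M)\times_{\Iso(\gamma)}S_\gamma$ induced by the action, use slice property (iii) together with translation by the group to get openness (a local homeomorphism near the slice), and rely on property (ii) for injectivity. If anything, you are slightly more explicit than the paper's proof, which verifies openness of the map $G\times S_\gamma\to\operatorname{Tub}(G(\gamma))$ but leaves the injectivity of the induced map on the twisted product implicit.
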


This theorem allows to describe a neighborhood of the orbits, which can be regarded as a topological equivariant tubular neighborhood.

\begin{remark}
The original  approach followed by Ebin relies on the fact that the Fréchet manifold of Riemannian manifolds has a graded family of Riemannian metrics generating the topology. This approach has been generalized in \cite{DiezRudolph2019}, to show the existence of slices for a more general family of Fréchet spaces and Fréchet Lie group actions.
\end{remark}

This article is organized as follows.
In the first part we present the general theory of smooth actions by finite-dimensional Lie groups on finite-dimensional manifolds.
For this setting we give a proof of a slice theorem, to which we will point later when proving a slice theorem for the action of $\Diff(M)$ on $\Riem(M)$.
We recall an infinite-dimensional manifold structure for $\Riem(M)$, which is modeled on a class of metric spaces called Fr\'{e}chet spaces.
Then we present the $L^2$-metric on $\Riem(M)$ studied by Gil-Medrano and Michor in \cite{Gil-MedranoMichor1991}.
This $L^2$-metric is $\Diff(M)$ invariant, and has an exponential map.
Finally, we present the proof of the~\autoref{thm:slice-thm} and the~\autoref{thm:tubular-nbhd-thm}.
These are based on the proofs for the finite-dimensional case.
We end this work by stating some consequences of the slice theorem.

\vspace*{2ex}
\begin{ack}
The authors would like to thank the referee for his helpful comments.
\end{ack}

\section{Preliminaries}

\begin{conv}
  We will henceforth use the following conventions:
  \begin{enumerate}
  \item All manifolds, submanifolds and Lie groups are assumed to be \linebreak smooth and finite-dimen\-sional, if not stated otherwise.
  \item By $M$ we always denote a closed, finite-dimensional smooth manifold.
  \item We denote by $G$ a Hausdorff topological group; often a finite-dimen\-sional Lie group.
  \item The continuous map describing an action of a group on a space is denoted by $\mu$.
  \item For a smooth vector bundle $E \to M$, we denote by $\Gamma^s(E)$ for $s \in \N \cup \{\infty\}$ the space of sections of differentiability $s$.
    Instead of $\Gamma^{\infty}(E)$, we will write $\Gamma(E)$.
  \end{enumerate}
\end{conv}

\subsection{Group actions}

A group $G$ equipped with a topology is called a \emph{topological group}, if the multiplication $m \colon G \times G \to G$, and the inverse $i \colon G \to G$ are continuous maps with respect to this topology.
In the present work we will additionally assume that all topological groups are Hausdorff.
This implies that for the identity element $e \in G$, the set $\{e\}$ is closed.
The condition of $\{e\}$ being closed is equivalent to $G$ being metrizable, and the metric can be chosen left-invariant by \cite[Sec.~1.22]{MontgomeryZippin}.

Let $G$ be a topological group, and $X$ be a topological space.
We say that \emph{$G$ acts on $X$} from the left, if there exists a continuous map $\mu \colon G \times X \to X$ such that:
\begin{enumerate}[label=(\roman*)]
\item\label{item:grp-action-i} $\mu (e, p) = p$ for any $p \in X$;
\item\label{item:grp-action-ii} $\mu(g, \mu(h,p)) = \mu(gh,p)$ for any $g,h\in G$ and any $p \in X$. 
\end{enumerate}

For a fixed $p \in X$ the set $G(p) := \{ \mu(g,p) \mid g \in G\} \subset X$ is called the \emph{orbit of $p$}.
The subgroup $G_p = \{g\in G\mid \mu(g,p) = p\}$ is called the \emph{isotropy subgroup} at $p$, or the \emph{stabilizer} of the action at $p$.
From now on, we will denote $\mu(g,p)$ simply by $g\cdot p$.
The subgroup given by intersection of all isotropy groups 
\[
  \bigcap_{p\in X} G_p,
\]
is called the \emph{ineffective kernel} of the action.
When this subgroup is trivial, the action is called \emph{effective}.
Any group action can be turned into an effective action and hence we will assume that all group actions in the present text are effective.
Moreover, if at any $p\in X$ we have $G_p=\{e\}$, we say the action is \emph{free}.
An action is \emph{transitive} if the orbit of any point $p\in X$ is the whole space $X$.

An action of $G$ on $X$ defines an equivalence relation $\sim$ on $X$.
We say $p\sim q$, if and only if, $p$ and $q$ are contained in the same orbit.
The quotient space obtained from the action's orbits is called the \emph{orbit space} of the action, and is denoted by $X/G$.
The topology of $X$ induces via the \emph{orbit quotient map} $\pi\colon X\to X/G$ the quotient topology on $X/G$.
For any subset $A \subset X$, we denote its image under $\pi$ by $A^\ast$.
For a point $p \in X$ we denote its image as $p^\ast$.

An action $\mu$ of a topological group $G$ on a topological space $X$ is called \emph{proper}, if the map $A \colon G \times X \to X \times X$ defined as
\[
  A(g,p) = (g \cdot p, p), 
\]
is proper, i.e.\ the preimage of any compact subset of $X$ under the map is a compact subset of $G \times X$.
This concept was introduced by Palais in \cite{Palais1961}.
If $X$ is metrizable, then we have the following characterization of proper actions.

\begin{proposition}[Prop.~3.19 in \cite{AlexandrinoBettiol}]\label{prop:charact-properness}
  Let $G$ be a Hausdorff topological group and let $X$ be a metrizable topological space.
  An action $\mu \colon  G \times X \to X$ is proper, if and only if, for any sequence $\{g_n\}$ in $G$ and any convergent sequence $\{x_n \}$ in $X$, such that $\{\mu(g_n, x_n)\}$ converges in $X$, the sequence $\{g_n\}$ admits a convergent subsequence.
\end{proposition}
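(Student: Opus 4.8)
The plan is to exploit the fact that both $G$ and $X$ are metrizable: $G$ because it is a Hausdorff topological group, so $\{e\}$ is closed, which was noted above to be equivalent to metrizability, and $X$ by hypothesis. Consequently $G \times X$ and $X \times X$ are metrizable as well, and in a metrizable space compactness coincides with sequential compactness. This equivalence is precisely what translates the topological requirement ``the preimage of every compact set is compact'' into the sequential statement of the proposition, and the argument then splits into the two implications.

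For the forward direction (properness implies the sequential condition), I would begin with sequences $\{g_n\}$ in $G$ and $\{x_n\}$ in $X$ satisfying $x_n \to x$ and $\mu(g_n, x_n) \to y$. The idea is to package the relevant data into a single compact set
\[
  K = \{(\mu(g_n, x_n), x_n) \mid n \in \N\} \cup \{(y, x)\} \subset X \times X,
\]
which is compact, being a convergent sequence together with its limit. Since $A$ is proper, $A^{-1}(K)$ is compact, and it contains every pair $(g_n, x_n)$ because $A(g_n, x_n) = (\mu(g_n, x_n), x_n) \in K$. As $G \times X$ is metrizable, $A^{-1}(K)$ is sequentially compact, so $\{(g_n, x_n)\}$ has a convergent subsequence; projecting onto the first coordinate yields the desired convergent subsequence of $\{g_n\}$.

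For the converse (the sequential condition implies properness), let $K \subset X \times X$ be compact; I want to show $A^{-1}(K)$ is compact, and by metrizability it suffices to show it is sequentially compact. Given a sequence $(g_n, x_n)$ in $A^{-1}(K)$, the images $A(g_n, x_n) = (\mu(g_n, x_n), x_n)$ lie in $K$, so by sequential compactness of $K$ we may pass to a subsequence with $x_n \to x$ and $\mu(g_n, x_n) \to y$ and $(y, x) \in K$. The sequential hypothesis then supplies a further subsequence along which $g_n \to g$, whence $(g_n, x_n) \to (g, x)$ in $G \times X$. Continuity of $A$, which follows from continuity of $\mu$, combined with uniqueness of limits in the Hausdorff space $X \times X$, forces $A(g, x) = (y, x) \in K$, so $(g, x) \in A^{-1}(K)$. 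Thus every sequence in $A^{-1}(K)$ subconverges within $A^{-1}(K)$, proving it sequentially compact, hence compact, and $A$ proper.

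I do not expect a deep obstacle here; the care lies entirely in the bookkeeping. The first delicate point is ensuring metrizability is genuinely available so that sequential compactness may replace compactness throughout — this is exactly where the Hausdorff hypothesis on $G$ enters, through the metrizability remark preceding the statement. The second, in the converse direction, is verifying that the limit point $(g,x)$ actually lies in $A^{-1}(K)$ rather than merely in its closure, which is where continuity of $A$ and uniqueness of limits must be invoked explicitly. I would be careful not to assume any stronger ``closedness'' of $A$: the purely sequential manipulation above is self-contained and suffices.
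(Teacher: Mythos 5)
The paper itself offers no proof of this proposition; it is quoted from Alexandrino--Bettiol with only a citation, so there is no in-paper argument to compare yours against. Your proof is correct and is the standard one: you reduce properness, which is a statement about preimages of compact sets under $A$, to the sequential statement by identifying compactness with sequential compactness in the metrizable spaces $G\times X$ and $X\times X$; in the forward direction you package the convergent sequence $\{(\mu(g_n,x_n),x_n)\}$ together with its limit into a compact set $K$ and extract the subsequence from the compact set $A^{-1}(K)$, and in the converse you correctly close the loop by checking, via continuity of $A$ and uniqueness of limits in the Hausdorff space $X\times X$, that the sequential limit $(g,x)$ lies in $A^{-1}(K)$ and not merely in its closure. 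The one point to be aware of is your appeal to metrizability of $G$: you justify it by the paper's earlier remark that a Hausdorff topological group is metrizable, but that remark overstates the Birkhoff--Kakutani theorem, which requires first countability in addition to Hausdorffness. Both of your directions genuinely use that compact and sequentially compact subsets of $G\times X$ coincide, so the equivalence as you prove it needs $G$ first-countable (equivalently, metrizable). This is harmless for the paper, since the proposition is only ever applied to the Fréchet Lie group $\Diff(M)$, which carries a (complete, left-invariant) metric, but it is a hypothesis you are inheriting from the paper's remark rather than from the word ``Hausdorff'' alone, and it would be worth making explicit.
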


From this characterization of proper actions it easily follows that for a proper action all isotropy groups are closed.

In the particular case when the group $G$ is metrizable via a complete metric,
we have the following characterization for a proper action:

\begin{proposition}\label{prop: second characterization of proper action}
%
Let $G$ be a topological group with a complete left-invariant metric inducing the topology of $G$, and $X$ a metric space. An action $\mu\colon G\times X\to X$  is proper, if and only if, for any sequence $\{g_n\}$ in $G$ and any $x_0\in X$, if the sequence $\{\mu(g_n,x_0)\}$ converges to $x_0$ in $X$, then $\{g_n\}$ has a convergent subsequence.
\end{proposition}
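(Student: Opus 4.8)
The plan is to deduce the statement from the sequential characterization of properness in \autoref{prop:charact-properness}, which applies since both $G$ and $X$ are metrizable. Throughout I will write the complete left-invariant metric on $G$ as $d_G$ and use that, by completeness, a sequence in $G$ has a convergent subsequence precisely when it has a Cauchy subsequence, together with left-invariance in the form $d_G(ha,hb)=d_G(a,b)$.

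The forward implication is immediate. If $\mu$ is proper and $\{g_n\}$ satisfies $\mu(g_n,x_0)\to x_0$, I apply \autoref{prop:charact-properness} with the constant (hence convergent) sequence $x_n=x_0$; since $\mu(g_n,x_n)=\mu(g_n,x_0)\to x_0$ converges, \autoref{prop:charact-properness} yields a convergent subsequence of $\{g_n\}$. So the real content is the converse.

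For the converse I assume the single-point return condition and verify the hypothesis of \autoref{prop:charact-properness}: given $\{g_n\}$, a convergent sequence $x_n\to x$, and $\mu(g_n,x_n)\to y$, I must produce a convergent subsequence of $\{g_n\}$. The idea is to manufacture, out of $\{g_n\}$, an auxiliary sequence that fixes a single base point in the limit, so that the hypothesis can be invoked. The natural candidates are the ``ratios'' $g_m^{-1}g_n$: using the identity $\mu(g_m^{-1}g_n,x_n)=\mu(g_m^{-1},\mu(g_n,x_n))$ together with $\mu(g_n,x_n)\to y$ and the continuity of $\mu(g_m^{-1},\blank)$ for \emph{fixed} $m$, one controls $\mu(g_m^{-1}g_n,x_n)$ as $n\to\infty$. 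Left-invariance of $d_G$ guarantees that $g_m^{-1}g_n$ has a convergent subsequence if and only if $g_n$ does, so no information is lost in passing to ratios, and completeness of $G$ is what turns a Cauchy ratio-sequence into an actual limiting element of $G$.

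The hard part will be bridging the gap between the hypothesis, which only constrains a \emph{fixed} base point $x_0$ returning to itself, and the situation at hand, in which the base points $x_n$ \emph{move} (only $x_n\to x$) and the image limit $y$ need not equal $x$ nor lie on the orbit of $x$. Controlling $\mu(g_m^{-1},\blank)$ near $y$ uniformly in $m$ amounts to an equicontinuity of the family $\{\mu(g,\blank)\}_{g\in G}$ that holds on $d_G$-compact subsets of $G$ but is exactly what one cannot assume along a sequence $\{g_n\}$ escaping every compact set. Overcoming this is the crux: I expect that completeness and left-invariance of $d_G$ must be used to promote the per-index continuity estimates into a single limit, for instance through a diagonal extraction that produces a Cauchy subsequence of $\{g_n\}$ in $d_G$. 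Verifying that such an extraction is legitimate — rather than the routine bookkeeping with \autoref{prop:charact-properness} — is where the proof will stand or fall.
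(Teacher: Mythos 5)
Your forward implication is fine and matches the paper: apply Proposition~\ref{prop:charact-properness} to the constant sequence $x_n=x_0$. For the converse, however, you have written a plan rather than a proof. You correctly identify the ratios $g_m^{-1}g_n$ as the right objects and correctly flag the crux, but you then stop at ``I expect that completeness and left-invariance must be used\dots where the proof will stand or fall.'' That unresolved step is precisely the content of the proposition, and your proposed resolution --- a diagonal extraction producing a Cauchy subsequence of $\{g_n\}$ in $d_G$ directly --- does not go through as stated: the hypothesis, applied at the base point $x$, only yields that (a subsequence of) $g_m^{-1}g_n$ converges to \emph{some element of the isotropy group} $G_x$, not to the identity. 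Hence the smallness of $d\bigl(\mu(g_m^{-1}g_n,x),x\bigr)$ controls $\{g_n\}$ only \emph{modulo} $G_x$, and cannot by itself make $\{g_n\}$ Cauchy in $G$; if $G_x$ is nontrivial, $\{g_n\}$ genuinely need not be Cauchy.

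The paper closes this gap with two moves absent from your outline. First, it shows that the hypothesis, applied to constant sequences, already forces every isotropy group $G_x$ to be compact. Second, it averages the left-invariant metric over $G_x$ and descends to a complete metric $\tilde{\rho}$ on $G/G_x$; the ratio estimate then shows $\{g_nG_x\}$ is Cauchy in $(G/G_x,\tilde{\rho})$, hence convergent, and compactness of $G_x$ is used again to lift this back to a convergent subsequence of $\{g_n\}$ in $G$ (write $g_nh_n\to g$ with $h_n\in G_x$, extract $h_{n_k}\to h$, conclude $g_{n_k}$ converges). Without the detour through $G/G_x$ and the compactness of $G_x$, the argument does not close. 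As an aside, the moving-base-point issue you raise is real: one needs to compare $\mu(g_n,x_n)$ with $\mu(g_n,x)$ uniformly in $n$, and the paper does this by taking the metric on $X$ to be $G$-invariant --- an assumption worth making explicit in any complete write-up.
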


\begin{proof}
It is clear that the conclusion of Proposition~\ref{prop:charact-properness}, implies the necessity direction.

Denote by  $\rho\colon G\times G\to \R$ the complete left-invariant metric of  $G$.

We will show that for any $x\in X$, the isotropy group $G_x$ is compact. I.e.\ that any sequence  $\{g_n\}$ in $G_x$ has a convergent subsequence, with limit in $G_x$. Since we have $\rho$ an invariant metric,  $G_x$ is a metric space.  Consider $\{g_n\}\subset G_x\subset G$ an arbitrary sequence. Then for all elements in the sequence, we have that $\mu(g_n,x) = x$. Thus the sequence $\{\mu(g_n,x)\}$ is constant and thus it is a convergent sequence. From our hypothesis, we conclude that there exists a convergent subsequence $\{g_{n_k}\}$, with limit some $g_0 \in G$. From the continuity of $\mu$, we have that $\mu(g_{n_k},x)=x$ converges to $\mu(g,x)$. By the uniqueness of limits, we see that $\mu(g,x) = x$. Thus $g$ is an element of $G_x$, and we conclude that $G_x$ is compact.

Since $G_x$ is compact for any $x\in X$, we can assume, by averaging $\rho$ over $G_x$, that $\rho$ is $G_x$-invariant, by left multiplication. This induces a complete metric $\tilde{\rho}$ on the quotient space $G/G_x$. 

Next we show that the action is proper by showing that the characterization given in Proposition~\ref{prop:charact-properness} holds. Consider $\{(g_n,x_n)\}$ an arbitrary sequence in  $G\times X$  such that the sequence $\{(\mu(g_n,x_n),x_n)\}$ converges in $X\times X$ to $(y,x)$. We will show that there is a convergent subsequence of $\{g_n\}$. Given $\varepsilon$, for sufficiently  large $n\in \N$, we have that for the left-invariant metric $d$ on $X$, that $d(\mu(g_n,x_n),\mu(g_n,x)) = d(x_n,x) < \varepsilon/2$. Since the sequence $\{\mu(g_n,x_n)\}$ converges to $y$, for sufficiently large $n$ we have
\[
 d(y,\mu(g_n,x)) \leqslant d(y,\mu(g_n,x_n)) + d(\mu(g_n,x_n),\mu(g_n,x)) < \varepsilon.
\]
Therefore, for $n$ and $m$ large enough, 
\[
\varepsilon > d(\mu(g_n,x),\mu(g_m,x))= d(\mu(g_m^{-1}g_n,x),x).
\]
From our hypothesis, there is a convergent subsequence, which we denote again by $g_m^{-1}g_n$ converging to some $g_0\in G_{x}$. This implies, that for the complete metric $\tilde{\rho}$ on $G/G_x$ we have for large indices $n$ and $m$:
\[
	\tilde{\rho}(g_nG_{x},g_mG_{x}) <\varepsilon.
\]
This means, that the sequence $\{g_nG_{x}\}$ in $G/G_{x}$ is a Cauchy sequence. Since $\tilde{\rho}$ is complete, there exists a convergent subsequence of $\{g_nG_{x}\}$ converging to $gG_{x_0}$ for some $g\in G$. I.e.\ for large $n$ we have 
\begin{align*}
	\varepsilon > \tilde{\rho}(g_nG_x,gG_x) &= \inf\{\rho(g_na_1,g a_2)\mid a_1, a_2\in G_x\}\\[0.5em]
	 &= \inf\{\rho(g_na_1a_2^{-1},g)\mid a_1, a_2\in G_x\}\geqslant 0.
\end{align*}
Thus there is a sequence $\{g_nh_n\}$ in $G$ converging to $g$. Since $G_x$ is compact, we can find a subsequence $h_{n_k}$ converging to $h\in G_x$. From this, using the triangle inequality, we obtain that the sequence $g_{n_k}$ converges to $h^{-1}g$ in $G$.
\end{proof}


Furthermore, when $G$ is a smooth manifold, and the group operations $m \colon G \times G\to G$, and $i \colon G\to G$ are smooth we say that $G$ is a \emph{Lie group}.
If $M$ is a fixed smooth manifold, and $G$ is a Lie group, then a \emph{smooth action} by $G$ on $M$ is a smooth map $\mu \colon G \times M \to M$ satisfying \ref{item:grp-action-i} and \ref{item:grp-action-ii} above.

The orbit space of a proper smooth action of a Lie group on a smooth manifold, is a second-countable Hausdorff space.
If the action is not free, the quotient might not be a closed manifold.
For example, if we consider the $2$-sphere and fix an axis of rotation, we have a smooth action by the circle on the $2$-sphere with two fix points.
The orbit space is homeomorphic to a closed interval.

\begin{thm}\label{t: orbit of proper action in homeo to G/Gp}
  Let $G$ be a Hausdorff topological group acting properly on a metric space $(X,d)$, where $d$ is $G$-invariant.\footnote{I.e. $d(g \cdot p, g \cdot q) = d(p,q)$ for all $p,q \in X$ and $g \in G$.}
  Fix $p \in X$, and consider $\mu_p\colon G \to X$ given by $\mu_p(g) = g \cdot p$.
  Let $\rho \colon G \to G/G_p$ be the quotient map.
  Then there exists a $G$-equivariant homeomorphism $\tilde{\mu}_p\colon G/G_p \to G(p)$ onto the orbit through $p$ such that $\tilde{\mu}_p \circ \rho = \mu_p$.
  Furthermore, the orbit $G(p)$ is a closed subspace of $X$.
  \begin{center}
    \begin{tikzcd}[sep=large]
      G\arrow{d}[swap]{\rho}\arrow{dr}{\mu_p} &\\
      G/G_p \arrow{r}[swap]{\tilde{\mu}_p} & X
    \end{tikzcd}
  \end{center}
\end{thm}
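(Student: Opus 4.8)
The plan is to build $\tilde\mu_p$ from the universal property of the quotient map $\rho$, and then to promote the resulting continuous bijection to a homeomorphism using properness together with the $G$-invariance of $d$; closedness of the orbit will fall out of the same properness argument.

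First I would identify the fibers of $\mu_p$. We have $\mu_p(g)=\mu_p(g')$ precisely when $g\cdot p=g'\cdot p$, i.e. $g'^{-1}g\in G_p$, i.e. $gG_p=g'G_p$; so $\mu_p$ is constant on each coset of $G_p$. Since $\rho$ is a quotient map and $\mu_p$ is continuous (being a restriction of the action map $\mu$), the universal property of quotients yields a well-defined continuous map $\tilde\mu_p\colon G/G_p\to X$ with $\tilde\mu_p\circ\rho=\mu_p$. The fiber computation shows $\tilde\mu_p$ is injective, its image is exactly $G(p)$, and writing the $G$-action on $G/G_p$ as left translation one checks $\tilde\mu_p(g'\cdot gG_p)=(g'g)\cdot p=g'\cdot\tilde\mu_p(gG_p)$, so $\tilde\mu_p$ is $G$-equivariant. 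At this stage $\tilde\mu_p$ is a continuous equivariant bijection onto $G(p)$.

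The main obstacle is the continuity of $\tilde\mu_p^{-1}$; this is the step where properness is indispensable, and it is where an arbitrary orbit map can fail (as for an irrational line on the torus). Since $G$ is metrizable (hence first countable) and $\rho$ is open, the images of a countable neighborhood basis make $G/G_p$ first countable, so it suffices to argue with sequences. Suppose $g_n\cdot p\to g\cdot p$ in $G(p)$; I want $g_nG_p\to gG_p$. Putting $h_n=g^{-1}g_n$, the $G$-invariance of $d$ gives $d(h_n\cdot p,p)=d(g_n\cdot p,g\cdot p)\to 0$, so $h_n\cdot p\to p$. Applying Proposition~\ref{prop:charact-properness} with the constant sequence $x_n=p$ extracts a convergent subsequence $h_{n_k}\to h$, and continuity of the action forces $h\cdot p=p$, i.e. $h\in G_p$; then $g_{n_k}=g\,h_{n_k}\to gh$ and continuity of $\rho$ yields $g_{n_k}G_p\to ghG_p=gG_p$. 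Running this on an arbitrary subsequence shows every subsequence of $\{g_nG_p\}$ has a further subsequence converging to $gG_p$, so by first countability the full sequence converges to $gG_p$. This is precisely continuity of $\tilde\mu_p^{-1}$, completing the proof that $\tilde\mu_p$ is a homeomorphism onto $G(p)$.

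Finally, closedness of $G(p)$ follows directly from properness. Given $y_n=g_n\cdot p\in G(p)$ with $y_n\to y\in X$, I apply Proposition~\ref{prop:charact-properness} once more with the constant sequence $x_n=p$ and the convergent sequence $\mu(g_n,p)=y_n\to y$, obtaining a convergent subsequence $g_{n_k}\to g$; continuity of the action then gives $y=\lim_k g_{n_k}\cdot p=g\cdot p\in G(p)$. Hence $G(p)$ is closed in $X$.
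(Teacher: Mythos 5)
Your proof is correct and follows essentially the same route as the paper: factor $\mu_p$ through $\rho$ via the universal property, then use properness (through Proposition~\ref{prop:charact-properness} with the constant sequence at $p$) to control sequences and obtain both the continuity of $\tilde\mu_p^{-1}$ and the closedness of the orbit. If anything, your treatment is slightly more careful than the paper's, which asserts that $\{g_n\}$ itself converges to $g$ where properness only yields a convergent subsequence --- your subsequence-of-subsequences argument closes exactly that gap.
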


\begin{proof}
  Consider $g_1G_p = g_2G_p$.
  Then $g_2^{-1}g_1\in G_p$, and thus $g_2^{-1}g_1\cdot p = p$.
  Therefore, we have $g_1 \cdot p = g_2 \cdot p$.
  I.e if $\rho(g_1) = \rho(g_2)$, then $\mu_p(g_1) = \mu_p(g_2)$.
  Since $\rho$ is a quotient map, there exists a continuous, well-defined map $\tilde{\mu}_p\colon G/G_p\to X$ making the diagram commute.
  This map is injective, and its image is the orbit $G(p)$.
  We only have to proof that $\tilde{\mu}_p$ is closed.
  Observe that since we have an invariant metric, we can talk about convergent sequences.
  Take $C\subset G/G_p$ closed, and consider $\tilde{\mu}_p(C)$ in $X$.
  Consider a sequence $\{g_n\cdot p\}$ in $\tilde{\mu}_p(C)$, with limit $g\cdot p$.
  From the properness of the $G$-action we have that the sequence $\{g_n\}$ converges to $g$ in $G$.
  Since $\rho^{-1}(C)$ is closed in $G$, we have that $g$ lies in $\rho^{-1}(C)$, and thus $g G_p$ is an element of $C$.
\end{proof}

\begin{remark}
\hfill
  \begin{enumerate}
  \item We used the properness of the action in Theorem~\ref{t: orbit of proper action in homeo to G/Gp} to show that the inverse of $\tilde{\mu}_p$ is continuous.
  \item In the following section, we will see that for a smooth, proper, effective group action by a Lie group on a smooth manifold $M$, we can always find an invariant metric on $M$.
  In this case, the conclusions of Theorem~\ref{t: orbit of proper action in homeo to G/Gp} can be strengthened to diffeomorphism for $\tilde{\mu}_p$, and smooth embedded submanifold for $G(p)$ (cf.\ \cite[Proposition~3.41]{AlexandrinoBettiol}).
\item Moreover, we will see that the existence of a slice yields an improved version of this theorem topologically describing a neighborhood of the orbits.
  \end{enumerate}
\end{remark}

\subsection{The finite-dimensional slice theorem}\label{ss: classical slice theorem}
A group action gives a partition of a smooth manifold $M$, whose global structure is described by the topological space $M/G$.
Thus, we can attempt to recover information about $M$ from a separate analysis of the orbits, and the orbit space.
Proper actions, which where introduced by Palais in \cite{Palais1961}, are a good setting for this decomposition study.
This follows from the fact that for a proper smooth action an orbit is an embedded closed submanifold (see \cite[Proposition 3.41]{AlexandrinoBettiol}).
Actually, the orbit at $p\in M$ is diffeomorphic to $G/G_p$, and as stated above the orbit space is a second-countable Hausdorff topological space.

There are stronger geometric consequences to the properness of a smooth action by a Lie group $G$ on a smooth manifold $M$.
We say that a Riemannian metric $\gamma$ on $M$ is $G$-invariant, if for any $g \in G$, the map
\begin{align*}
  \mu_g \colon M \to M,
  \quad
  p \mapsto g \cdot p,
\end{align*}
is an isometry of $\gamma$.
The following theorem states that for any proper smooth action by $G$ on $M$ there exists at least one $G$-invariant metric.

\begin{thm}[Theorem 3.65 in \cite{AlexandrinoBettiol}]\label{t: existence of G-invariant Riemannian metric}
  Let $G$ be a Lie group, and let $M$ be a smooth manifold.
  Further, let $\mu \colon G \times M \to M$ be a proper smooth action.
  There exists a $G$-invariant Riemannian metric $\gamma$ on $M$ such that $\mu_G = \{ \mu_g \mid g \in G\}$ is a closed Lie subgroup of $\Iso (M, \gamma)$.
\end{thm}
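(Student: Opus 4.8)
The plan is to produce $\gamma$ by averaging an arbitrary background metric over $G$, using the properness of $\mu$ to make the averaging integral converge and to keep it positive definite. Fix once and for all an arbitrary Riemannian metric $g_0$ on $M$ (obtained from a partition of unity) and a \emph{right}-invariant Haar measure $d_r g$ on the Lie group $G$. One would like to set $\gamma_x = \int_G (\mu_g^* g_0)_x\, d_r g$, but for noncompact $G$ this integral diverges; the whole difficulty is concentrated in manufacturing a \emph{cutoff}: a smooth function $f\colon M\to[0,\infty)$ such that $g\mapsto f(\mu_g(x))$ has compact support in $G$ for every $x$, while still $\int_G f(\mu_g(x))\, d_r g = 1$ for all $x\in M$.

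Constructing $f$ is the heart of the argument and the one place where properness is indispensable. For any compact $K\subset M$, properness of the map $A(g,p)=(\mu_g p,p)$ forces $\{g\in G : \mu_g(x)\in K\}$ to be compact; hence for any compactly supported $b\in C^\infty(M)$ the map $g\mapsto b(\mu_g(x))$ is automatically compactly supported, and its orbit average $x\mapsto \int_G b(\mu_g(x))\, d_r g$ is a smooth, nonnegative function that is constant on orbits and therefore descends to $M/G$. Since $M/G$ is second-countable and Hausdorff, hence paracompact, I would cover it by relatively compact open sets, lift these to relatively compact opens in $M$, and glue local bump functions $b_\alpha$ against a partition of unity $\{\eta_\alpha\}$ of $M/G$, dividing each $b_\alpha$ by its orbit-average (which is positive wherever $\eta_\alpha>0$). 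The orbit-constant factors pull out of the integral, and the identity $\sum_\alpha\eta_\alpha\equiv 1$ yields exactly $\int_G f(\mu_g(x))\, d_r g = 1$.

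With $f$ in hand, define
\[
  \gamma_x(v,w) = \int_G f(\mu_g(x))\,(\mu_g^* g_0)_x(v,w)\, d_r g .
\]
This is smooth in $x$ and symmetric bilinear in $(v,w)$; it is positive definite because each $\mu_g$ is a diffeomorphism, so for $v\neq 0$ the integrand is strictly positive on the positive-measure set $\{g : f(\mu_g(x))>0\}$. Invariance $\mu_h^*\gamma=\gamma$ follows from $\mu_g\circ\mu_h=\mu_{gh}$ together with the change of variables $k=gh$: here using a \emph{right}-invariant measure is essential, since a left-invariant one would introduce a modular-function factor $\Delta(h)$ and spoil invariance whenever $G$ is not unimodular. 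Thus every $\mu_h$ is a $\gamma$-isometry and $\mu_G\subset\Iso(M,\gamma)$.

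It remains to see that $\mu_G$ is a \emph{closed} Lie subgroup. By Myers--Steenrod $\Iso(M,\gamma)$ is a Lie group, and $g\mapsto\mu_g$ is a continuous homomorphism into it, injective by effectiveness. If $\mu_{g_n}\to\Phi$ in $\Iso(M,\gamma)$, then for a fixed $x_0$ the sequence $\mu(g_n,x_0)$ converges while the constant sequence $x_0$ converges, so the properness characterization of Proposition~\ref{prop:charact-properness} supplies a subsequence $g_{n_k}\to g$ in $G$; continuity of the action gives $\Phi=\mu_g\in\mu_G$, so $\mu_G$ is closed, and Cartan's closed-subgroup theorem upgrades it to an embedded Lie subgroup. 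The main obstacle throughout is the construction of the cutoff $f$ — turning properness into an everywhere-positive, genuinely convergent orbit average — with the subsidiary but easily missed point that the averaging must be taken against a right-invariant Haar measure.
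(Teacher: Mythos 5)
The paper itself does not prove this statement; it is quoted from \cite{AlexandrinoBettiol} (Theorem~3.65), so there is no internal proof to compare against. Your argument is, in outline, exactly the standard Palais-type averaging proof that the cited reference gives: average a background metric against a cutoff whose orbit integrals are identically $1$, with properness guaranteeing that the sets $\{g \in G : \mu_g(x)\in K\}$ are compact and hence that all integrals over $G$ converge. Your two ``delicate'' points are both handled correctly: the substitution $k=gh$ is a right translation, so a right-invariant Haar measure is indeed what makes $\mu_h^{*}\gamma=\gamma$ come out clean (for unimodular $G$ the distinction evaporates, but you are right that in general it matters); and the closedness of $\mu_G$ in $\Iso(M,\gamma)$ via Proposition~\ref{prop:charact-properness} applied to the constant sequence $x_n=x_0$, followed by Cartan's closed subgroup theorem, is exactly right.

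The one place you gloss over something real is the phrase ``a partition of unity $\{\eta_\alpha\}$ of $M/G$.'' The orbit space is only a locally compact, second-countable Hausdorff space, not a manifold, so a partition of unity on it is a priori only continuous, and a continuous $\eta_\alpha\circ\pi$ would make $f$, and hence $\gamma$, only continuous. The standard repair uses exactly the objects you already constructed: the orbit averages $\beta_\alpha(x)=\int_G b_\alpha(\mu_g(x))\,d_rg$ are \emph{smooth} $G$-invariant functions on $M$, so one sets $b=\sum_\alpha b_\alpha$ and $\beta=\sum_\alpha \beta_\alpha$ (locally finite sums after shrinking the cover) and takes $f=b/\beta$; then $\int_G f(\mu_g(x))\,d_rg=\beta(x)^{-1}\int_G b(\mu_g(x))\,d_rg=1$ because $\beta$ is constant on orbits. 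Equivalently, your $\eta_\alpha$ should be manufactured as $\beta_\alpha/\sum_{\alpha'}\beta_{\alpha'}$ rather than taken as an abstract partition of unity on $M/G$. With that adjustment (and the routine remark that for $x$ in a compact set only finitely many $\alpha$ contribute and the relevant subset of $G$ stays in a fixed compact set, which justifies smoothness and differentiation under the integral), your proof is complete and matches the argument in the cited source.
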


We note that $\Iso (M, \gamma)$ is a Lie group by the theorem of Myers-Steenrod (see \cite{MeyersSteenrod1939}).

Combining this theorem with the fact that the orbits are closed, the orbit space obtains the structure of a metric space, and the quotient map $\pi\colon M \to M/G$ is a \emph{submetry} with respect to the metrics on $M$ and $M/G$.
This means that the ball of radius $\delta$ centered at $p$ in $M$ is mapped into the ball of radius $\delta$ centered at $p^\ast$ in $M/G$. 

\begin{remark}
Furthermore by \cite{Kankaanrinta2005}, the $G$-invariant metric can be considered complete.
\end{remark}

\begin{dfn}\label{dfn:slice}
  Let $G$ be a Lie group acting continuously on a topological manifold $X$.
  A \emph{slice} through $p \in X$, is a closed embedded submanifold $S_p$ of $X$ containing $p$ such that:
  \begin{enumerate}[label=(\roman*)]
  \item\label{item:slice-i} For any $g \in G_p$, we have $g \cdot S_p \subset S_p$.
  \item\label{item:slice-ii} If $g \in G$ is such that $g\cdot S_p \cap S_p\neq \emptyset$, then $g\in G_p$.
  \item\label{item:slice-iii} There exists an open neighborhood $U$ of the identity right-coset in $G/G_p$, and a cross-section $\chi\colon U\to G$, such that the map $F \colon U\times S_p\to X$ given by 
    \[
      F(u,s) = \mu(\chi(u),s),
    \]
    is a homeomorphism onto an open neighborhood of $p$ in $X$.
  \end{enumerate}
\end{dfn}

\begin{remark}
  This term can be naively generalized in several directions.
  Let $\mathcal C$ be a subcategory of manifold objects in a category of topological spaces with finite products and group objects.
  Let $X$ be an object and $G$ be a group object in $\mathcal C$ acting on $X$ internally, i.e.\ the action $G \times X \to X$ is a morphism in $\mathcal C$.
  For $p \in X$, a \emph{slice} is a closed embedding $i \colon S_p \hookrightarrow X$ with $p \in \operatorname{Im} i$ of an object $S_p$ in $\mathcal C$ satisfying \ref{item:slice-i} -- \ref{item:slice-iii} above.
  If $F$ in \ref{item:slice-iii} can be strengthened to be a morphism in $\mathcal C$, we call it an \emph{internal slice}.

  The main theorem states the existence of a slice for the action of $\Diff(M)$ on $\mathcal R(M)$ considered in $\mathcal C$, the category of Fréchet manifolds.
\end{remark}

We will give a proof of the existence of a slice for the case of finite-dimensional Lie groups and smooth manifolds, which we will later retrace in the infinite-dimensional setting.

\begin{thm}[Slice Theorem~3.49 in \cite{AlexandrinoBettiol}]\label{t: classical slice theorem}
  Consider a proper smooth action by a Lie group $G$ on $M$, a smooth manifold.
  For arbitrary $p\in M$ there exists a slice $S_p$ through $p$.  
\end{thm}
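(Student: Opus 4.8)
The plan is to follow the Riemannian-geometric construction, building the slice as the image under the normal exponential map of a small disc in the normal space to the orbit. First I would invoke Theorem~\ref{t: existence of G-invariant Riemannian metric} (together with the completeness afforded by the Kankaanrinta remark) to equip $M$ with a complete $G$-invariant Riemannian metric $\gamma$, so that $\mu_G$ sits inside $\Iso(M,\gamma)$ as a closed Lie subgroup. Since the action is proper, the isotropy group $G_p$ is compact, and by Theorem~\ref{t: orbit of proper action in homeo to G/Gp} together with the subsequent remark the orbit $G(p)$ is a closed embedded submanifold diffeomorphic to $G/G_p$. Let $\nu \to G(p)$ denote the normal bundle of the orbit with respect to $\gamma$, and let $\nu_p = (T_p G(p))^\perp \subset T_p M$ be its fiber at $p$.

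Next I would examine the isotropy representation. For $g \in G_p$ the differential $d(\mu_g)_p \colon T_p M \to T_p M$ is a linear isometry (because $\gamma$ is $G$-invariant) which fixes $p$ and hence preserves $T_p G(p)$; therefore it preserves the orthogonal complement $\nu_p$, giving an orthogonal action of the compact group $G_p$ on $\nu_p$. The crucial structural fact is that the whole normal bundle is the associated bundle
\[
  \nu \;\cong\; G \times_{G_p} \nu_p,
\]
and that the Riemannian exponential map of $\gamma$ is equivariant, i.e.\ $\exp_{g\cdot q}(d(\mu_g)_q v) = g \cdot \exp_q(v)$. I would then define, for a radius $\varepsilon>0$ to be fixed later,
\[
  S_p \;:=\; \exp_p\big(\{\, v \in \nu_p : \lvert v\rvert < \varepsilon \,\}\big),
\]
a submanifold through $p$ transverse to the orbit.

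The technical heart of the argument is the equivariant tubular neighborhood theorem: for $\varepsilon$ small enough the normal exponential map $E \colon \nu \to M$, $E(v)=\exp(v)$, restricts to a $G$-equivariant diffeomorphism of the open disc bundle of radius $\varepsilon$ onto a $G$-invariant open neighborhood $\Tub$ of $G(p)$. Under the identification $\nu \cong G\times_{G_p}\nu_p$ this reads $\Tub \cong G\times_{G_p}\{\,v\in\nu_p : \lvert v\rvert<\varepsilon\,\}$, with $S_p$ corresponding to the fiber over the identity coset $eG_p$. I expect this to be the main obstacle: establishing injectivity of $E$ on a sufficiently small disc bundle over the (possibly non-compact) orbit uses properness in an essential way, typically through a contradiction/compactness argument in the spirit of Proposition~\ref{prop:charact-properness}, rather than the purely local inverse-function-theorem argument available when the orbit is compact.

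Granting the tube, the three slice axioms fall out of the associated-bundle description. Property~\ref{item:slice-i} follows from equivariance together with $G_p$ preserving $\nu_p$: for $g\in G_p$ and $v\in\nu_p$ we have $g\cdot\exp_p(v)=\exp_p(d(\mu_g)_p v)\in S_p$. For property~\ref{item:slice-ii}, a point of $S_p$ is $[e,v]$ in $G\times_{G_p}\nu_p$ and $g\cdot[e,v]=[g,v]$; since $[g,v]=[e,w]$ forces $g\in G_p$, a nonempty intersection $g\cdot S_p\cap S_p$ implies $g\in G_p$. For property~\ref{item:slice-iii}, because $G_p$ is a closed (compact) subgroup the quotient $G\to G/G_p$ is a locally trivial principal $G_p$-bundle, so there is a cross-section $\chi\colon U\to G$ over a neighborhood $U$ of the identity coset; the map $F(u,s)=\mu(\chi(u),s)$ is then the composite of $\chi\times\mathrm{id}$ with the tube diffeomorphism, hence a homeomorphism onto an open neighborhood of $p$.
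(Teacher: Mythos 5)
Your construction is in substance the same as the paper's: equip $M$ with a complete $G$-invariant metric via Theorem~\ref{t: existence of G-invariant Riemannian metric}, and take $S_p=\exp_p$ of a small disc in the normal space to the orbit at $p$. The difference is organizational. The paper verifies the three slice axioms directly: (i) from the fact that elements of $G_p$ send geodesics normal to the orbit at $p$ to geodesics normal to the orbit at $p$; (ii) from injectivity of $\exp$ on a neighborhood $B$ of the relevant part of the zero section (a point of $\exp(B)$ with two distinct foot points is a contradiction); and (iii) by writing down an explicit continuous inverse of $F$ built from $\exp^{-1}$, the foot-point projection, the homeomorphism of Theorem~\ref{t: orbit of proper action in homeo to G/Gp}, and the local section $\chi$. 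You instead front-load the full equivariant tubular neighborhood theorem $G\times_{G_p}\nu_p\supset$ (disc bundle) $\to \Tub(G(p))$ and read the axioms off the associated-bundle description. That is a legitimate route in finite dimensions, but be aware of two points. First, in this paper the tube description is derived as a \emph{consequence} of the slice theorem (Theorem~\ref{t: tubular neighborhood G-invariant metric space}), so if you import the equivariant tube theorem as a black box you must take it from a source that proves it independently of slices, or the argument becomes circular. Second, you correctly flag that the genuine content is the injectivity of the normal exponential on a uniform-radius disc bundle over a possibly non-compact orbit, which needs properness via a compactness/contradiction argument in the spirit of Proposition~\ref{prop:charact-properness}; that is precisely the step you leave unproven, and it is also the step the paper itself glosses over when it simply asserts the existence of the set $B$. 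So the proposal is correct in outline, matches the paper's geometric idea, and shares with it the same unfilled technical core; what your version buys is that the tube theorem and all three slice axioms come out in one stroke from the associated-bundle picture, at the cost of having to establish that picture first.
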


\begin{proof}
  First we consider the $G$-invariant Riemannian metric $\gamma$ given by Theorem~\ref{t: existence of G-invariant Riemannian metric}.
  From the invariance  of the metric $\gamma$, it follows that if $\lambda\colon I\to M$ is a geodesic then for any $g\in G$, the curve $g\cdot\lambda(t)$ is a geodesic. 
  Next we fix $p\in M$, and consider the normal space $\nu_p M$ to $G(p)$ at $p$.
  We can find an open set $B$ contained in the tangent bundle $TM$, over which the exponential map is a diffeomorphism onto its image.
  Set $B^\perp$ the intersection of $B$ with the normal bundle $\nu G(p)\to G(p)$.
  Set $\varepsilon>0$ small enough, and let $V$ consist of all normal vector $v\in B^\perp$, with norm less than $\varepsilon$.
  Take $V_p\subset V$ to be a open ball of radius $\varepsilon$, around the origin, contained in the intersection of $B$ with the normal space  $\nu_p M$ of $G(p)$ at $p$.
  We claim that the image of $V_p$ under $\expo_p$ is the desired slice through $p$.

  From the fact that the action maps geodesics to geodesics, points \ref{item:slice-i} and \ref{item:slice-ii} follow: Take $g\in G_p$ and $s =\expo_p(t_0 X)\in S_p$.
  Then for the curve $\lambda(t) = \expo_p(tX)$ we have that $g\cdot \lambda (t)$ is a geodesic.
  Since $\lambda(0)=p$, and $g\in G_p$, then $g\cdot \lambda (0)= p$.
  Furthermore from the fact that $X$ is normal to the orbit at $p$, and the $G$-invariance of $\gamma$, it follows that $g\cdot \gamma$ is normal to the orbit at $p$.
  Thus $g\cdot s=g\cdot\lambda(t_0)$ lies in $S_p$.
  Thus $G_p(S_p)\subset S_p$.

  Now consider $g$  an arbitrary element in $G$, such that for some $s\in S_p$, $g\cdot s\in S_p$.
  This means that $g	\cdot s = \tilde{s}$ for some $\tilde{s}\in S_p$.
  Assume that $g\cdot p\neq p$.
  Then we have one point in the image of $B$ under the exponential map $\expo$, with two different foot points: $g\cdot s$ has $g\cdot p$ as base point for $\expo$, and $\tilde{s}$ has $p$ as a base point for $\expo$.
  Since the exponential map is a diffeomorphism on $B$, this is a contradiction.
  Thus $g\cdot p = p$.

  To prove point \ref{item:slice-iii} we need to consider the right action of $G_p$ on $G$, given by the group multiplication.
  Since this action is free, 
  the fibration $\rho\colon G\to G/G_p$ is a (smooth) $G_p$-principal bundle (cf.\ \cite[Corollary~3.38]{AlexandrinoBettiol}).
  Thus there exists an open neighborhood $U$ of the  coset $eH\in G/H$, such that the bundle $\rho|_{\rho^{-1}(U)}\colon \rho^{-1}(U)\to U$ is trivial.
  i.e.\ there exists a cross-section $\chi\colon U\to G$ of $\rho$.
  By Theorem~\ref{t: orbit of proper action in homeo to G/Gp} there exists a homeomorphism $\varphi\colon G/G_p\to G(p)$.
  Thus we may assume that $U$ is such that $\tilde{U} = \varphi (U)\subset \expo(V)$.
  Consider $(g_1G_p,s_1), (g_2G_p,s_2)\in G/G_p\times S_p$ and assume that $F(g_1G_p,s_1) =F(g_2G_p,s_2)$.
  Then we have that 
  \[
    \chi(g_2G_p)^{-1}\chi(g_1G_p) \cdot s_1 = s_2\in S_p.
  \]
  By point \ref{item:slice-ii} we have that $\chi(g_2G_p)^{-1}\chi(g_1G_p)\in G_p$.
  Thus $g_1G_p = g_2G_p$, and from this it follows that $s_1=s_2$.
  Therefore $F$ is injective. 


  Given $q\in \expo(V)$ there exists a unique vector $v\in V$, normal to the orbit such that $\expo(v)$.
  Furthermore we can assume that for the projection map $\pi\colon V\to G(p)$, we have that $x=\pi(v) \in \tilde{U}\subset G(p)$ (i.e we take $q\in \pi^{-1}(\tilde{U})$).
  Then, there exists $g\in G$ such that $x = g\cdot p$.
  Thus for the homeomorphism  $\tilde{\mu}_p$ given by Theorem~\ref{t: orbit of proper action in homeo to G/Gp}, we have that $\tilde{\mu}_p(x) = gG_p \in U$.
  Observe that 
  \[
    \chi(gH)\cdot p = \mu_p(\chi(gH)) = \tilde{\mu}_p\circ \rho(\chi(gH) ) =\tilde{\mu}_p(gH) = g\cdot p = x.
  \]
  Therefore $\chi(gH)^{-1}\cdot x = p$.
  We remark that, since $q$ lies in a geodesic normal to $G(p)$, and the Riemannian metric $\gamma$ on $M$ is $G$-invariant, then $\chi(gH)^{-1}\cdot q$ lies in a geodesic normal to $G(p)$, which goes through $p$.
  Thus $ \chi(gH)^{-1}\cdot q$ lies on the slice $S_p$.
  From this discussion it follows that $(gG_p,\chi(gH)^{-1}\cdot q)\in G/G_p\times S_p$ and 
  \[
    F(gG_p,\chi(gH)^{-1}\cdot q) = q.
  \] 
  In other words for $q\in \expo(\pi^{-1}(\tilde{U}))$,  the inverse of $F$ is given by 
  \[
    F^{-1}(q) = \Big(\tilde{\mu}_p^{-1}\big(\pi(\expo^{-1}(q))\big),\chi\big(\tilde{\mu}_p^{-1}(\pi(\expo^{-1}(q)))\big)\cdot q\Big)
  \]
  Therefore $F^{-1}$ is a continuous function since it is given by continuous functions, and thus $F$ is a homeomorphism. 
\end{proof}

The following result depends only on the existence of slices for every point in the space acted on.

\begin{thm}\label{t: tubular neighborhood G-invariant metric space}
  Let $G$ be a Lie group, $X$ a topological manifold.
  Assume $\mu \colon G \times X \to X$ is a proper action, such that for any point $p \in X$, a closed slice $S_p$ exists, satisfying \ref{item:slice-i}, \ref{item:slice-ii} and \ref{item:slice-iii}, and the isotropy group $G_p$ is compact.
  Then a closed neighborhood of the orbit $G(p)$ is homeomorphic to 
  \[
    G \times_{G_p} S_p
  \]
\end{thm}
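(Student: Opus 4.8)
The plan is to write down one explicit $G$-equivariant map from the associated bundle onto a tube around the orbit and to prove it is a homeomorphism, then to use properness to make the resulting neighborhood closed. Recall first that $G\times_{G_p}S_p$ is the quotient of $G\times S_p$ by the free action of the compact isotropy group $G_p$ given by $h\cdot(g,s)=(gh^{-1},h\cdot s)$; this action is well defined precisely because slice property~\ref{item:slice-i} guarantees $h\cdot s\in S_p$ for every $h\in G_p$, and since $G_p$ is compact it is proper, so the orbit map $q\colon G\times S_p\to G\times_{G_p}S_p$ is an open quotient map and the quotient is again a topological manifold. I would then consider $\Phi\colon G\times S_p\to X$, $\Phi(g,s)=\mu(g,s)=g\cdot s$, which is continuous and satisfies $\Phi(gh^{-1},h\cdot s)=g\cdot s=\Phi(g,s)$ for $h\in G_p$. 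Hence $\Phi$ factors through $q$ and yields a continuous map $\bar\Phi\colon G\times_{G_p}S_p\to X$, $\bar\Phi([g,s])=g\cdot s$, which is $G$-equivariant for left translation on the $G$-factor.

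Next I would verify that $\bar\Phi$ is injective with image the tube $G\cdot S_p=\{g\cdot s\mid g\in G,\,s\in S_p\}$. Surjectivity onto $G\cdot S_p$ is immediate. If $g_1\cdot s_1=g_2\cdot s_2$ with $s_1,s_2\in S_p$, then $(g_2^{-1}g_1)\cdot s_1=s_2\in S_p$, so $(g_2^{-1}g_1)\cdot S_p\cap S_p\neq\emptyset$, and slice property~\ref{item:slice-ii} forces $h:=g_2^{-1}g_1\in G_p$. Since $g_1=g_2h$ and $s_2=h\cdot s_1$, the definition of the $G_p$-action gives $[g_1,s_1]=[g_2h,s_1]=[g_2,h\cdot s_1]=[g_2,s_2]$, so $\bar\Phi$ is injective.

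To promote $\bar\Phi$ to a homeomorphism I would combine slice property~\ref{item:slice-iii} with equivariance. Writing $\varpi\colon G\times_{G_p}S_p\to G/G_p$ for the bundle projection and using the cross-section $\chi\colon U\to G$ of~\ref{item:slice-iii}, the map $\Psi\colon U\times S_p\to\varpi^{-1}(U)$, $(u,s)\mapsto[\chi(u),s]$, is a homeomorphism, and one checks $\bar\Phi\circ\Psi$ is exactly the map $F$ of~\ref{item:slice-iii}; hence $\bar\Phi$ carries $\varpi^{-1}(U)$ homeomorphically onto the open neighborhood $F(U\times S_p)$ of $p$. Because the $G$-action on $G/G_p$ is transitive, any point of $G\times_{G_p}S_p$ can be moved into $\varpi^{-1}(U)$ by some $g''\in G$, and near that point $\bar\Phi$ equals $\mu_{(g'')^{-1}}\circ\bar\Phi\circ(g''\cdot-)$, a composite of homeomorphisms; thus $\bar\Phi$ is an open map. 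A continuous open injection is a homeomorphism onto its image, so $\bar\Phi$ identifies $G\times_{G_p}S_p$ with the $G$-invariant tube $G\cdot S_p$, which contains $p=e\cdot p$ and hence the whole orbit $G(p)$, and is therefore a neighborhood of the orbit.

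The hard part is to see that this neighborhood can be taken closed, and this is the step that genuinely uses properness. Here I would show that $\bar\Phi$ is a closed map. Suppose $C\subset G\times_{G_p}S_p$ is closed and $\bar\Phi([g_n,s_n])=g_n\cdot s_n\to y$ with $[g_n,s_n]\in C$. Using that $S_p$ is a closed slice one extracts a subsequence along which the slice-points $s_n$ converge to some $s\in S_p$; then, since $\{s_n\}$ and $\{\mu(g_n,s_n)\}$ both converge, Proposition~\ref{prop:charact-properness} supplies a convergent subsequence $g_{n_k}\to g$, whence continuity of $q$ and closedness of $C$ give $[g,s]\in C$ with $\bar\Phi([g,s])=y$, so $\bar\Phi(C)$ is closed. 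As $X$ is a topological manifold, hence locally compact Hausdorff, a closed continuous injection is a homeomorphism onto a closed subset, and the tube $G\cdot S_p$ is the desired closed neighborhood homeomorphic to $G\times_{G_p}S_p$. I expect the convergence of the slice-points $\{s_n\}$ to be the main obstacle: it is exactly the point where the abstract slice data must be reconciled with properness and the local compactness of $X$ (in the finite-dimensional setting of Theorem~\ref{t: classical slice theorem} it is ensured by taking the slice with relatively compact, closed image together with the compactness of $G_p$), and isolating this convergence is the crux of the argument.
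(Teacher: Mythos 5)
Your construction is essentially the paper's own: you form the associated bundle $G\times_{G_p}S_p$ (your $h\cdot(g,s)=(gh^{-1},h\cdot s)$ is the paper's $h\star(g,s)=(gh,\mu(h^{-1},s))$ up to relabelling $h\mapsto h^{-1}$), descend $\mu$ to $\bar\Phi$, get injectivity from slice property~\ref{item:slice-ii} and openness from property~\ref{item:slice-iii} together with $G$-equivariance. The paper runs the same argument by showing $\phi(g,s)=\mu(g,s)$ is an open quotient map onto $\Tub(G(p))=\mu(G\times S_p)$; you are in fact more explicit about injectivity, which the paper leaves implicit, and your use of transitivity on $G/G_p$ to propagate openness is a cleaner way to phrase the paper's computation with the section $\chi$.

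The one place you go beyond the paper is the attempt to show the tube is genuinely \emph{closed}, and there your argument has a real gap, which you correctly flag yourself: from a sequence $\{s_n\}$ in the closed slice $S_p$ you cannot in general extract a convergent subsequence. Closedness of $S_p$ in a (generally non-compact) manifold gives no sequential compactness, and nothing in Definition~\ref{dfn:slice} forces $S_p$ to be compact or the $s_n$ to remain in a compact set; so the step ``one extracts a subsequence along which $s_n\to s\in S_p$'' is unjustified as written. To be fair, the paper's proof does not establish closedness of $\mu(G\times S_p)$ at all --- it only proves the homeomorphism with $G\times_{G_p}S_p$ --- so on the part of the statement the paper actually proves, your argument is complete and matches it. If you want to repair the closedness claim in the finite-dimensional setting, the natural fix is to shrink the slice to $\expo_p$ of a \emph{closed} ball of radius $\varepsilon/2$ inside the open ball $V_p$ of radius $\varepsilon$ used in Theorem~\ref{t: classical slice theorem}; then $S_p$ is compact, your subsequence extraction is legitimate, and the properness argument via Proposition~\ref{prop:charact-properness} closes the proof. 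In the Fr\'echet setting of Theorem B this compactness is not available and a separate argument would be needed.
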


\begin{proof}
  We set $\mbox{Tub}(G(p)) = \mu(G\times S_p)$.
  Since the action of $G_p$ leaves $S_p$ invariant, then we consider the following action of $G_p$ on $G\times S_\gamma$.
  For $h\in G_p$ and $(g,s)\in S_p$, we set
  \[
    h\star (g,s) = (gh,\mu(h^{-1},s) )
  \]
  We denote the orbit space of this action by $G\times_{G_p} S_p$, and we consider $\pi\colon G\times S_p\to G\times_{G_p} S_p$ the orbit projection map.
  We define  $\phi\colon G\times S_p\to \mbox{Tub}(G(p))$ by $\phi(g,s) =\mu(g,s)$.
  Thus by definition the map $\phi$ is continuous and surjective.
  For $(g,s) $ and $(gh,\mu(h^{-1},s))$, we have that $\phi(g,s)=\phi(gh,\mu(h^{-1},s))$.
  From the fact that  $\pi$ is a quotient map, it follows that there exists a continuous map $\psi\colon G\times_{G_p} S_p\to \mbox{Tub}(G(p))$ making the following diagram commute
  \begin{center}
    \begin{tikzcd}
      G\times S_p \arrow{d}[swap]{\pi}\arrow{dr}{\phi} & \\
      G\times_{G_p} S_p \arrow{r}[swap]{\psi} & \mbox{Tub}(G(p))
    \end{tikzcd}
  \end{center}
  We will show that the map $\phi$ is an open map.
  Let $U\subset G/H$ an open neighborhood of the identity coset, given by point \ref{item:slice-iii} of \ref{dfn:slice}.
  Consider the quotient map $p\colon G\to G/H$.
  Then $p^{-1}(U)\times S_p\subset G\times S_p$ is open. 
  Consider the map $F\colon U\times S_p\to X$ given by point \ref{item:slice-iii}.
  Set $N= F(U\times S_p)\subset X$, which is open.
  Since  we have $\chi(U)\subset p^{-1}(U)$, for the cross-section  $\chi\colon U\to G$, then $N\subset \phi(p^{-1}(U)\times S_p)$.
  Consider $(g,s)\in G\times S_p$.
  Since the action of $G_p$ on the fibers of $p\colon G\to G/H$ is transitive, then there exists $h\in G_p$ such that
  \[
    g= \chi\big(gH\big)h
  \]
  Thus $\phi(g,s) = \phi(\chi\big(gH\big)h,s) = \mu(\chi\big(gH\big)h,s) = \mu(\chi\big(gH\big),\mu(h,s)) $.
  Since $\mu(h,s)$ is an element of $S_p$ by property $(i)$ of the Slice, then $\phi(g,s)\in N$.
  Thus the image under $\phi$ of an open set in $G\times S_p$ is open in $\mbox{Tub}(G(p))$.
  Therefore $\phi$ is a quotient map, and $\psi$ is a homeomorphism.
\end{proof}


\subsection{Fr\'{e}chet spaces}\label{ss: Frechet spaces}
Let $F$ be a real vector space.
A \emph{seminorm} on $F$ is a real valued function $\norm{\cdot} \colon F\to \R$ such that:
\begin{enumerate}[label=(\roman*)]
\item $\norm{f} \geqslant 0$ for any vector $f\in F$;
\item $\norm{f + g} \leqslant \norm{f}+\norm{g}$ for all vectors $f, g\in F$;
\item $\norm{\lambda f} = \abs{\lambda} \norm{f}$ for any vector $f\in F$, and any $\lambda\in \R$.
\end{enumerate}
A family of seminorms $\{\norm{\cdot}_\alpha\mid \alpha \in \Lambda\}$ defines a unique topology on $F$ such that a sequence $\{f_k\}$ converges to $f$ if and only if for all $\alpha$ in $\Lambda$, the limit of $\norm{f_k-f}_\alpha$ is $0$.
We call $(F,\{\norm{\blank}_\alpha\})$ with this induced topology a \emph{locally convex topological vector space}.
This topology is metrizable if and only if $\Lambda$ is countable as a set.
In this case we say that a sequence $\{f_k\}$ is Cauchy if for any $\varepsilon >0$ there exists an $N\in \N$, such that for $n,m \geqslant N$ and for all $\alpha\in \Lambda$ we have $\norm{f_n-f_m}< \varepsilon$.
We say that a metrizable locally convex topological vector space is \emph{complete} if for any Cauchy sequence $\{f_k\}$ there exist $f\in F$, such that $\{f_k\}$ converges to $f$ in $(F,\{\norm{\blank}_\alpha\})$.
A \emph{Fr\'{e}chet space} is a complete, metrizable, locally convex topological vector space.

We can define derivatives in Fr\'{e}chet spaces as follows.
Let $E$ and $F$ be  Fr\'{e}chet spaces, and $U \subset E$ an open subset.
Consider a continuous map $f \colon U \to F$.
We say that $f$ is \emph{differentiable at $x \in U$} in the direction of $v \in E$, if the \emph{differential}
\[
  Df(x)v := \lim_{t\to 0}\frac{f(x+tv)-f(x)}{t}.
\]
exists.
We say that $f$ is of class $C^1(U)$ if the limit exists for arbitrary pairs $(x,v)$ in $U\times E$, and the map $Df\colon U\times E\to F$ is continuous in both arguments.
By considering 
\[
  \lim_{s\to 0}\frac{Df(x+s h)v -Df(x)v}{s}, 
\]
we can define the second derivative $D^2f\colon U\times E\to E\to F$, of $f$.
\begin{remark}
  It does not make sense to consider the partial derivative of $Df$ with respect to $v$, since $Df(x)v$ is linear in $v$.
\end{remark}
By iterating this definition, we can define for $k\in \N$ maps of class $C^k$, and maps of class $C^\infty$ between Fr\'{e}chet spaces.

\begin{dfn}
  Given a Hausdorff topological space $\mathcal M$, we say that it is \emph{modeled on a Fr\'{e}chet space} $F$ in an analogous way to smooth finite-dimensional manifolds.
  Namely, $\mathcal M$ is modeled on $F$ if there exists an atlas $\{(U_i,\phi_i)\mid i \in \mathcal{I}\}$, where each $U_i\subset \mathcal M$ is open, and each $\phi_i\colon U_i\to E$ is a homeomorphism onto its image.
  Furthermore, if $U_{ij}= U_i\cap U_j\neq \emptyset$, the transition map $\phi_i\circ(\phi_j)^{-1}\colon \phi_j(U_{ij})\to \phi_i(U_{ij})$ is a $C^\infty$ map of Fr\'{e}chet spaces.
\end{dfn}

\begin{remark}
  Since Fr\'{e}chet spaces are not necessarily finite-dimensional, a Fr\'{e}chet manifold is not necessarily finite-dimensional.
  Furthermore, they differ from Banach manifolds in several aspects.
  Among the most crucial differences is the absence of an inverse function theorem for arbitrary Fr\'{e}chet manifolds (cf.\ \cite{Hamilton}).
\end{remark}

\subsection{Compact-open topology}\label{ss: Compact-open topology}
We fix two smooth manifolds $M$, and $N$.
We denote the set of all smooth maps between $M$ and $N$ by $C^\infty (M,N)$.
Let $k \in \N \cup \{\infty\}$.
There exists a topology on $C^\infty (M,N)$ with the following property: a sequence of maps in $C^\infty (M,N)$ converges if and only if the first $k$ derivatives converge uniformly on compact subsets of $M$.
This topology is called the \emph{compact-open $C^k$-topology}\footnote{Other common names in the case $k = \infty$ are \emph{weak topology}, e.g.\ \cite{Hirsch}, or \emph{topology of uniform convergence over compact subsets}, e.g.\ \cite{TuschmannWraith}.}.
Whereas, if $M$ is a compact manifold, we will also call it the \emph{$C^k$-topology}.
In general, the compact-open $C^{\infty}$-topology is metrizable and $C^\infty (M,N)$ (cf.\ \cite[Corollary 41.12]{KrieglMichor}).
This topology is relevant to us by the following result:
\begin{thm}
  If $M$ is compact, then for any smooth vector bundle $\pi \colon E \to M$ over $M$, the space of smooth cross-sections $\Gamma(E)$ with the $C^{\infty}$-topology induced from $C^{\infty}(M, E)$ is a Fr\'{e}chet manifold.
\end{thm}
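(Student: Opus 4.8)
The plan is to recognize that $\Gamma(E)$ is a real vector space under pointwise addition and scalar multiplication, and that it therefore suffices to equip it with the structure of a Fr\'{e}chet space: any Fr\'{e}chet space $F$ is trivially a Fr\'{e}chet manifold via the single global chart $(F, \mathrm{id})$, whose (empty) transition data is vacuously $C^\infty$. Thus the entire content of the statement is to show that the $C^\infty$-topology on $\Gamma(E)$ is induced by a countable family of seminorms with respect to which $\Gamma(E)$ is complete, metrizable, and locally convex.

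First I would exploit the compactness of $M$ to choose a finite atlas $\{U_1, \ldots, U_N\}$ of $M$ over which $E$ trivializes, say $E|_{U_i} \cong U_i \times \R^r$ with $r$ the rank of $E$, together with compact sets $K_i \subset U_i$ whose interiors still cover $M$. Under the trivialization a section $s \in \Gamma(E)$ restricts on $U_i$ to a smooth $\R^r$-valued map $s_i$, and using local coordinates on $U_i$ one may speak of its partial derivatives $\partial^\alpha s_i$ for multi-indices $\alpha$. For each $k \in \N$ I would set
\[
  \norm{s}_k := \max_{1 \le i \le N} \; \sup_{x \in K_i, \, \abs{\alpha} \le k} \; \abs{\partial^\alpha s_i(x)}.
\]
This is a countable family of seminorms, so by the discussion in Section~\ref{ss: Frechet spaces} it endows $\Gamma(E)$ with a metrizable locally convex topology. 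One then checks that this topology is exactly the $C^\infty$-topology induced from $C^\infty(M,E)$: since $M$ is compact, convergence uniformly on compact subsets coincides with uniform convergence on all of $M$, and the finitely many charts, together with the boundedness of the transition functions and all their derivatives on the overlaps, guarantee that the resulting topology is independent of the chosen atlas.

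It remains to prove completeness. Given a sequence $\{s^{(n)}\}$ that is Cauchy with respect to all the $\norm{\blank}_k$, the local representatives $s^{(n)}_i$ and all of their partial derivatives $\partial^\alpha s^{(n)}_i$ form uniformly Cauchy sequences on each $K_i$, hence converge uniformly. By the standard theorem that a uniform limit of functions whose derivatives also converge uniformly is differentiable with the expected derivatives, applied inductively over the order of $\alpha$, each limit $s_i := \lim_n s^{(n)}_i$ is smooth with $\partial^\alpha s_i = \lim_n \partial^\alpha s^{(n)}_i$. Because the $s^{(n)}$ are genuine sections, their local pieces agree on overlaps, and this compatibility passes to the limit, so the $s_i$ glue to a smooth global section $s \in \Gamma(E)$ to which $\{s^{(n)}\}$ converges. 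Hence $\Gamma(E)$ is complete, and therefore a Fr\'{e}chet space.

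The identification of the seminorm topology with the $C^\infty$-topology and its independence of the auxiliary choices are where the compactness of $M$ enters essentially; the main analytic obstacle is the completeness step, specifically verifying that the uniform limit is again a \emph{smooth} section rather than merely a continuous one, which rests on interchanging limits and differentiation. Once completeness is established, the Fr\'{e}chet manifold structure is immediate from the single global chart.
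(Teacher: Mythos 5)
Your proposal is correct, but it is worth noting that the paper does not actually prove this statement at all: its ``proof'' consists of the single line ``See \cite[Sec.~42]{KrieglMichor}''. What you have written is the standard direct construction that the citation stands in for in the compact, linear case: a finite trivializing atlas, the countable family of $C^k$-sup-seminorms over compact exhausting sets, identification of the resulting topology with the one induced from $C^\infty(M,E)$, and completeness via the classical theorem on interchanging uniform limits and differentiation, after which the single global chart makes the Fr\'echet space trivially a Fr\'echet manifold. All of these steps are sound. The one place you assert rather than argue is the identification of your seminorm topology with the topology ``induced from $C^\infty(M,E)$'': in the cited reference $C^\infty(M,E)$ carries a manifold-of-mappings structure defined through quite different machinery (convenient calculus, local models $\Gamma_c(f^*TE)$), and checking that its subspace topology on $\Gamma(E)$ agrees with yours is exactly the content being delegated to the reference; for compact $M$ this is routine but should be acknowledged as a step. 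The trade-off between the two routes is clear: Kriegl--Michor's framework handles non-compact $M$ and nonlinear target manifolds (which the paper needs anyway for $\Diff(M)\subset C^\infty(M,M)$), while your elementary argument is self-contained and exposes where compactness of $M$ is used, namely in the finiteness of the atlas and the equivalence of compact-open and uniform convergence.
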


\begin{proof}
  See \cite[Sec.~42]{KrieglMichor}.
\end{proof}

In particular, the \emph{space of riemannian metrics} $\mathcal R(M)$, which is an open subset in $\Gamma(S^2T^{\ast}M)$, is a Fréchet manifold.


A topological group $G$, which is also a  Fr\'{e}chet manifold, is called a \emph{Fr\'{e}chet Lie group} if the operations of multiplication and taking inverses are smooth in the  Fr\'{e}chet sense.

\begin{thm}[Theorem~43.1 in \cite{KrieglMichor}]\label{T: the Diffeomorphis group is open in C(M;M)}
For a compact smooth manifold $M$ the group $\Diff(M )$ of all smooth diffeomorphisms of $M$ is an open Fréchet submanifold of $C^\infty(M, M )$, composition and inversion are smooth.
\end{thm}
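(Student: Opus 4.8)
The plan is to prove the statement in three stages: openness of $\Diff(M)$ inside $C^\infty(M,M)$, the resulting Fréchet submanifold structure, and finally the smoothness of composition and inversion in the convenient-calculus sense of \cite{KrieglMichor}. The first two are elementary given compactness of $M$; the genuine work is in the last, and within it the smoothness of inversion is the real obstacle.

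First I would establish openness. The compact-open $C^\infty$-topology dominates the $C^1$-topology, in the sense that the identity map $C^\infty(M,M)\to C^1(M,M)$ is continuous, so it suffices to produce a $C^1$-neighborhood of a fixed $f\in\Diff(M)$ consisting of diffeomorphisms and then pull it back. Since $M$ is compact, $\det df$ is bounded away from $0$, so every $g$ that is $C^1$-close to $f$ is a local diffeomorphism; this is the open immersion condition. Injectivity persists as well: if $g_n\to f$ in $C^1$ with $g_n(x_n)=g_n(y_n)$ and $x_n\neq y_n$, compactness extracts $x_n\to p$ and $y_n\to p$ (the common limit being forced by injectivity of $f$), and near $p$ a uniform lower bound on $df$ gives a bi-Lipschitz estimate that $g_n$ inherits for large $n$, contradicting $g_n(x_n)=g_n(y_n)$. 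An injective local diffeomorphism of the compact $M$ is both open and closed, so its image is a union of components; comparing with the nearby diffeomorphism $f$ shows $g$ is onto each relevant component, hence bijective, and the finite-dimensional inverse function theorem makes $g^{-1}$ smooth, so $g\in\Diff(M)$. As an open subset of a Fréchet manifold, $\Diff(M)$ is then itself a Fréchet manifold, with charts obtained by restricting the exponential-map charts of $C^\infty(M,M)$ that model a neighborhood of $f$ on $\Gamma(f^\ast TM)$.

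For composition I would invoke the convenient calculus of \cite{KrieglMichor}, in particular smoothness of the evaluation $\mathrm{ev}\colon C^\infty(M,M)\times M\to M$ together with the exponential law (cartesian closedness). The associated map $(f,g,x)\mapsto f(g(x))=\mathrm{ev}(f,\mathrm{ev}(g,x))$ is a composite of smooth evaluations, hence smooth on $C^\infty(M,M)\times C^\infty(M,M)\times M$; the exponential law then identifies it with a smooth map $C^\infty(M,M)\times C^\infty(M,M)\to C^\infty(M,M)$, which is exactly composition.

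The main obstacle is smoothness of inversion, since the Fréchet category has no general inverse function theorem. Here I would argue curve-by-curve: in the convenient calculus a map is smooth if and only if it carries smooth curves to smooth curves, so let $c\colon\R\to\Diff(M)$ be a smooth curve and set $F(t,x)=c(t)(x)$, a smooth map $\R\times M\to M$. The inverse $G(t,y):=c(t)^{-1}(y)$ is characterized by $F(t,G(t,y))=y$, and since $\partial_x F(t,x)=d(c(t))_x$ is invertible, the finite-dimensional implicit function theorem yields $G$ smooth in $(t,y)$. By the exponential law $t\mapsto c(t)^{-1}$ is then a smooth curve in $C^\infty(M,M)$, so inversion sends smooth curves to smooth curves and is therefore smooth. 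This reduction of an infinite-dimensional inversion to the finite-dimensional implicit function theorem, mediated by the exponential law, is the technical heart of the proof and the step I expect to demand the most care.
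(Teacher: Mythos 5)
The paper gives no proof of this statement; it is quoted verbatim as Theorem~43.1 of \cite{KrieglMichor}, so there is nothing internal to compare against. Your argument is correct and is essentially the proof given in that reference: openness via $C^1$-stability of diffeomorphisms on a compact manifold, smoothness of composition via cartesian closedness and the smooth evaluation map, and smoothness of inversion via the curve-wise characterization of smoothness (which on Fréchet spaces agrees with the paper's notion of $C^\infty$) together with the finite-dimensional inverse/implicit function theorem applied to $(t,x)\mapsto\bigl(t,c(t)(x)\bigr)$ — so no gaps to report.
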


\begin{cor}\label{cor:diff-is-a-frechet-lie-group}
 For $M$ a closed smooth manifold, the group of smooth diffeomorphisms $\Diff(M)$ is a Fr\'{e}chet Lie group.
\end{cor}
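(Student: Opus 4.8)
The plan is to derive this corollary almost directly from Theorem~\ref{T: the Diffeomorphis group is open in C(M;M)}, simply checking that each clause in the definition of a Fréchet Lie group is satisfied. First I would invoke the standing convention that $M$ is closed, hence compact and boundaryless, so that the cited theorem applies verbatim. That theorem already supplies two of the three ingredients we need: it asserts that $\Diff(M)$ is an \emph{open} Fréchet submanifold of $C^\infty(M,M)$, and that composition and inversion are smooth maps in the Fréchet sense.

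The only remaining structural point is to confirm that $\Diff(M)$ is itself a Fréchet manifold, and this is where openness does the work. Since $C^\infty(M,M)$ is a Fréchet manifold (by the preceding discussion of the compact-open $C^\infty$-topology for the compact manifold $M$), an open subset inherits a Fréchet manifold structure: one restricts the charts of $C^\infty(M,M)$ to the open set $\Diff(M)$, and the transition maps remain $C^\infty$ because restricting to an open subset preserves smoothness between Fréchet spaces. Thus $\Diff(M)$ carries a natural Fréchet manifold structure compatible with its subspace topology.

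Finally I would assemble these facts against the definition. Because composition and inversion are smooth in the Fréchet sense, they are in particular continuous, so $\Diff(M)$ with composition as multiplication is a Hausdorff topological group; combined with the Fréchet manifold structure and the smoothness of the two operations, this is precisely the definition of a Fréchet Lie group. I expect no genuine obstacle here: all of the analytic weight---continuity and smoothness of composition and inversion, and the local model of $C^\infty(M,M)$ on a Fréchet space of sections---is carried by Theorem~\ref{T: the Diffeomorphis group is open in C(M;M)} and the structure results preceding it. The only care required is bookkeeping, namely matching each clause of ``Fréchet Lie group'' to the corresponding assertion and noting that it is openness that transfers the manifold structure from $C^\infty(M,M)$ down to $\Diff(M)$.
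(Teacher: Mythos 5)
Your proposal is correct and matches the paper's (implicit) argument: the corollary is drawn directly from Theorem~\ref{T: the Diffeomorphis group is open in C(M;M)}, with openness in the Fréchet manifold $C^\infty(M,M)$ supplying the manifold structure and the stated smoothness of composition and inversion supplying the rest of the definition. No further content is needed.
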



We can define the Lie Algebra of a Fréchet Lie group as the tangent space to the identity element. The Lie bracket is defined via the adjoint representation. The Lie algebra can be identified with the set of left-invariant vector fields on the group (see \cite{KhesinWendt}). The Lie algebra of $\Diff(M)$ is identified in the following theorem.

\begin{thm}[Theorem~43.1 in \cite{KrieglMichor}]
  Let $M$ be a closed smooth manifold.
  The Lie algebra of the Fréchet Lie group $\Diff(M)$ is the Lie algebra $\Gamma(TM)$, of all smooth vector fields on $M$, equipped with the negative of the usual Lie bracket.
  Furthermore, the exponential map $\expo \colon \Gamma(TM) \to \Diff(M),\ V \mapsto \phi^V(1)$ is given by evaluating the flow $\phi^V$ of a vector field $V$ at time $1$.
\end{thm}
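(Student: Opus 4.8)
The plan is to verify the three assertions in turn: the identification of the underlying vector space $T_{\mathrm{id}}\Diff(M)$, the description of the exponential map, and the computation of the bracket, with the sign in the last step being the delicate point.

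First I would identify the tangent space at the identity. Since $\Diff(M)$ is an open submanifold of $C^\infty(M,M)$ by \autoref{T: the Diffeomorphis group is open in C(M;M)}, its tangent space at the identity map coincides with $T_{\mathrm{id}}C^\infty(M,M)$. In the mapping-space calculus underlying that theorem, a tangent vector at a map $f$ is a vector field along $f$, i.e.\ a smooth section of the pullback bundle $f^{*}TM$; concretely it is represented by a smooth path $t\mapsto f_t$ with $f_0=f$ and velocity $x\mapsto \frac{d}{dt}\big|_{t=0}f_t(x)$. Taking $f=\mathrm{id}_M$, the pullback $\mathrm{id}_M^{*}TM$ is just $TM$, so a tangent vector is exactly a smooth section of $TM$. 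Hence $T_{\mathrm{id}}\Diff(M)=\Gamma(TM)$, and the underlying vector space of the Lie algebra is identified.

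Next I would determine the exponential map. By definition, for a Fréchet Lie group the exponential of $V\in\Gamma(TM)$ is the value at time $1$ of the unique one-parameter subgroup $t\mapsto\gamma_V(t)$ with $\gamma_V'(0)=V$. A one-parameter subgroup is a homomorphism $\R\to\Diff(M)$, that is, a family $\{\phi_t\}$ with $\phi_{s+t}=\phi_s\circ\phi_t$ and $\phi_0=\mathrm{id}$, which is precisely the defining property of the flow of a time-independent vector field. Since $M$ is closed, every vector field $V$ is complete, so its flow $\phi^V$ is globally defined and yields a one-parameter subgroup with initial velocity $V$. Uniqueness of one-parameter subgroups then forces $\gamma_V=\phi^V$, whence $\expo(V)=\phi^V(1)$.

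Finally I would compute the bracket through the adjoint representation, which is where the sign appears. The bracket is $[V,W]=\frac{d}{dt}\big|_{t=0}\mathrm{Ad}_{\gamma_V(t)}W$, where $\mathrm{Ad}_\phi$ is the differential at the identity of conjugation $\psi\mapsto\phi\circ\psi\circ\phi^{-1}$. Differentiating the conjugation applied to a path through the identity with velocity $W$ shows that $\mathrm{Ad}_\phi W=T\phi\circ W\circ\phi^{-1}$, namely the pushforward $\phi_{*}W$. Substituting $\phi=\phi^V_t$ and invoking the standard identity $\frac{d}{dt}\big|_{t=0}(\phi^V_t)_{*}W=-\mathcal{L}_V W$, where $\mathcal{L}_V W$ is the Lie derivative and equals the ordinary bracket of vector fields, yields $[V,W]=-\mathcal{L}_V W$, exactly the negative of the usual Lie bracket. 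The hard part is precisely this sign: it is a consequence of the fact that conjugation in $\Diff(M)$ induces the pushforward $\phi_*$ rather than the pullback, combined with the relation $\frac{d}{dt}\big|_{t=0}(\phi^V_t)^{*}W=\mathcal{L}_V W$; one must differentiate the pushforward $(\phi^V_t)_*=(\phi^V_{-t})^{*}$, and the reversal of the time parameter is what flips the sign. A secondary technical point is checking that all the maps and differentiations above are smooth in the Fréchet sense, for which one again relies on the mapping-space calculus supporting \autoref{T: the Diffeomorphis group is open in C(M;M)}.
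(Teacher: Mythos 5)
Your proposal is correct, but note that the paper itself offers no proof of this statement: it is quoted verbatim as Theorem~43.1 of Kriegl--Michor and used as a black box. What you have written is essentially the standard argument that the cited reference carries out, so the comparison is really between your sketch and the literature rather than with anything in the paper. Your three steps are all sound: $T_{\mathrm{id}}\Diff(M)=\Gamma(\mathrm{id}^{*}TM)=\Gamma(TM)$; a smooth one-parameter subgroup $t\mapsto\phi_t$ satisfies $\frac{\partial}{\partial t}\phi_t(x)=V(\phi_t(x))$ with $V=\frac{d}{dt}\big|_{0}\phi_t$, so it is the flow of its initial velocity (and completeness on a closed $M$ gives existence); and $\mathrm{Ad}_{\phi}W=\phi_{*}W$ together with $(\phi^V_t)_{*}=(\phi^V_{-t})^{*}$ produces the sign, since $\frac{d}{dt}\big|_{0}(\phi^V_t)^{*}W=\mathcal L_V W$. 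The one place where your argument leans on unstated machinery is the Fréchet-analytic content: in a general Fréchet Lie group one has neither existence nor uniqueness of one-parameter subgroups with prescribed velocity (no Picard--Lindelöf), and the smoothness of $V\mapsto\phi^V_1$ as a map $\Gamma(TM)\to\Diff(M)$ is genuinely nontrivial. Here both are rescued by reducing to the finite-dimensional ODE on $M$ pointwise and by the convenient-calculus results underlying the openness of $\Diff(M)$ in $C^{\infty}(M,M)$; you gesture at this in your last sentence, and for a self-contained proof that is exactly the part one would have to expand. As a sketch of why the theorem is true, and in particular of where the minus sign comes from, your account is accurate and more informative than the paper's bare citation.
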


\begin{remark}\label{R: Diff(M) is metrizable via a complete-left-invariant metric}
Moreover, for $M$ compact, the Fréchet structure at the tangent space of $\mathrm{Id}_M\in \Diff(M)$ induces a complete left-invariant metric on $G$, inducing the topology (see Theorem at end of page 53 in \cite{Subramaniam}). 
\end{remark}


\subsection{Riemannian structure on the space of Riemannian metrics}\label{ss: Riemannian structure on Riem(M)}

We can define a Riemannian metric $\sigma$ on $\Riem(M)$ as follows.
Given a Riemannian metric $\gamma$ on $M$, we identify the tangent space of $\Riem(M)$ at $\gamma$ with the sections $\Gamma(S^2T^\ast M)$ (see for example \cite{KrieglMichor},\cite{Clarke}).
Consider $S$ and $T$, two symmetric $(0,2)$-tensors in $\Gamma(S^2T^\ast M)$.
We set
\[
  \sigma_\gamma(S,T) := \int_M \mathrm{tr}_\gamma (S,T) \dop\vol(\gamma)=\int_M \sum\gamma^{ij} S_{il}\gamma^{lm}T_{jm} \dop \vol(\gamma),
\]
where $\dop \vol(\gamma)$ denotes the riemannian volume density with respect to $\gamma$.

Surprisingly, the topology induced by the Riemannian metric $\sigma$ on $\Riem(M)$ is weaker than the topology induced by the Fr\'{e}chet atlas.
For this reason, we say that $\sigma$ is a \emph{weak} Riemannian structure.
One reason why we are not recovering the original topology on $\Riem(M)$ via $\sigma$ is that $\Riem(M)$ equipped with $\sigma$ is not complete in the sense that each tangent space (which can be identified with $\Gamma(S^2T^\ast M)$) is not complete with respect to the interior product $\sigma$. 
Its completion is a Sobolev space (cf.\ \cite{Ebin1968},\cite{Gil-MedranoMichor1991}).

We note that it is still possible to take directional derivatives of $\sigma$, and thus we can attempt to define the Levi-Civita connection of $\sigma$ via the Koszul formula:
\begin{align*}
  \sigma(\nabla_X Y,Z) = & X\sigma(Y,Z)+Y\sigma(Z,X)-Z\sigma(X,Y)\\
                    &-\sigma(X,[Y,Z])-\sigma(Y,[X,Z])+\sigma(Z,[X,Y]).
\end{align*}
The fact that the tangent spaces of $(\Riem(M),\sigma)$ fail to be complete with respect to $\sigma$, implies that $\nabla_X Y$ may, a priori, not exist.
We point out that in case it does exists, then the Koszul formula implies it is unique.
The possible non-existence of $\nabla_X Y$ has as a  consequence the fact that there may be directions in $\Riem(M)$ where we cannot define the exponential map of $\sigma$.

A possible work-around for this problem, is to proceed as in \cite{Ebin1968}, and  complete the tangent spaces of $(\Riem(M),\sigma)$ to obtain Sobolev spaces.

In \cite{Gil-MedranoMichor1991}, they show that the exponential map, $\expo\colon T\Riem(M)\to \Riem(M)$, of  the metric $\sigma$ is actually defined everywhere in $\Riem(M)$, and for any tangent direction.
Furthermore, we  have the following theorem.

\begin{thm}[Theorem~45.13 in \cite{KrieglMichor}]\label{t: Exponential map of Riem(M) is a local diffeomorphism}
  The mapping \linebreak$(\pi,\expo)\colon T\Riem(M)\to \Riem(M)\times\Riem(M)$ is a diffeomorphism from an open neighborhood of the zero-section in $T\Riem(M)$ onto an open neighborhood of the diagonal in $\Riem(M)\times\Riem(M)$.
\end{thm}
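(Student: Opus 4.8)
The plan is to circumvent the absence of an inverse function theorem in the Fréchet category by exploiting the fact that the weak metric $\sigma$ is a pointwise bundle metric, so that its geodesic flow, and hence $\expo$, is computed fibrewise from finite-dimensional data. The whole strategy is to reduce the infinite-dimensional statement to a fibrewise finite-dimensional one and then push back up to sections.

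First I would invoke the explicit solution of the geodesic equation of $\sigma$ due to Gil-Medrano and Michor. Since $\sigma_\gamma(S,T)$ is an integral of a purely algebraic pairing of $\gamma$, $S$, $T$ against the density $\vol(\gamma)$, no derivatives of the sections occur; the Christoffel symbols of $\sigma$ obtained from the Koszul formula are therefore pointwise (multiplication) operators, and the geodesic equation does not couple distinct points of $M$. Over each $x\in M$ it reduces to a second-order ODE on the finite-dimensional open cone $P_x := S^2_+ T_x^\ast M$ of positive-definite symmetric forms (a copy of the symmetric space $GL(n,\R)/O(n)$), with initial data $\gamma(x)\in P_x$ and $h(x)\in S^2 T_x^\ast M \cong T_{\gamma(x)}P_x$. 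Consequently there is a smooth, fibre-preserving map $\Phi$, defined on a neighborhood of the zero section of the finite-rank bundle $\bigsqcup_{x\in M} TP_x \to M$, with
\[
  \expo_\gamma(h)(x) = \Phi\big(x,\gamma(x),h(x)\big).
\]

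Next, working fibrewise, I would apply the classical finite-dimensional statement: on each $P_x$ the map $(\mathrm{base},\expo)$ is a diffeomorphism from a neighborhood of the zero section of $TP_x$ onto a neighborhood of the diagonal of $P_x\times P_x$, since $d(\expo_{\gamma(x)})_0=\mathrm{id}$ and the ordinary inverse function theorem applies. Because $M$ is compact, these fibrewise neighborhoods can be chosen uniformly in $x$, and the resulting fibre-preserving local inverse $\Psi$, solving $k(x)=\Phi(x,\gamma(x),h(x))$ for $h(x)$, is smooth on a neighborhood of the diagonal of the fibred product $\bigsqcup_{x\in M} P_x\times_M P_x$.

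Finally I would transport this fibrewise inverse to the section level. As $M$ is compact, a smooth fibre-preserving map of finite-rank bundles induces a smooth map of the associated Fréchet manifolds of sections (the $\Omega$-lemma and push-forward results pervading \cite{KrieglMichor}); thus $\Psi$ induces a smooth map $(\gamma,k)\mapsto\big(\gamma,\; x\mapsto \Psi(x,\gamma(x),k(x))\big)$, which is a two-sided inverse of $(\pi,\expo)$ on a neighborhood of the zero section. Since $\expo_\gamma(0)=\gamma$, the zero section is carried to the diagonal, so the image is an open neighborhood of the diagonal, as required. I expect the genuine obstacle to be exactly the point at which one is tempted to use an inverse function theorem for $\Riem(M)$: this is unavailable, and the whole argument rests on replacing it by the pointwise reduction together with the smoothness of maps induced on sections over the compact base $M$.
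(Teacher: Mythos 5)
The paper does not actually prove this statement; it is quoted verbatim from Kriegl--Michor (Theorem~45.13), so there is no in-paper argument to compare against. Your proposal is correct and reconstructs essentially the strategy of the cited source: the explicit pointwise geodesic formula of Freed--Groisser and Gil-Medrano--Michor exhibits $(\pi,\expo)$ as induced by a fibre-respecting smooth map of finite-rank bundles over the compact base $M$, which is inverted fibrewise by the ordinary inverse function theorem and transported back to sections via the $\Omega$-lemma --- the only minor imprecision being that the metric induced on each fibre $S^2_+T_x^\ast M$ carries a volume-density weight and is not literally the standard symmetric-space metric on $GL(n,\R)/O(n)$, which does not affect the argument.
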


This theorem has the following consequence.

\begin{cor}
  Given $\gamma \in \Riem(M)$, there exists an open neighborhood of $\gamma$ in $\Riem(M)$ over which the exponential map is a diffeomorphism.
\end{cor}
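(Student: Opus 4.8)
The plan is to deduce this Corollary as a direct application of the preceding Theorem~\ref{t: Exponential map of Riem(M) is a local diffeomorphism}, which asserts that $(\pi,\expo)$ restricts to a diffeomorphism from an open neighborhood $\mathcal{U}$ of the zero-section in $T\Riem(M)$ onto an open neighborhood $\mathcal{V}$ of the diagonal $\Delta \subset \Riem(M)\times\Riem(M)$. Fix $\gamma \in \Riem(M)$. The idea is to restrict the full exponential map to the single tangent fiber $T_\gamma\Riem(M)$ and to show that, on a sufficiently small neighborhood of the origin in that fiber, it is a diffeomorphism onto an open neighborhood of $\gamma$.

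First I would set $\mathcal{U}_\gamma := \mathcal{U} \cap T_\gamma\Riem(M)$, an open neighborhood of the origin $0_\gamma$ in the tangent space at $\gamma$, and consider the restriction $\expo_\gamma := \expo|_{\mathcal{U}_\gamma}$. Since $\pi$ is constant (equal to $\gamma$) on this fiber, the global diffeomorphism $(\pi,\expo)$ restricted to $\mathcal{U}_\gamma$ takes values in $\{\gamma\}\times\Riem(M)$, and on this set it coincides with $v \mapsto (\gamma, \expo_\gamma(v))$. Because $(\pi,\expo)$ is injective on $\mathcal{U}$, the map $\expo_\gamma$ is injective on $\mathcal{U}_\gamma$; because $(\pi,\expo)$ is an immersion (indeed a local diffeomorphism) and $\pi$ contributes no variation along the fiber, the derivative of $\expo_\gamma$ at each point of $\mathcal{U}_\gamma$ is an isomorphism onto $T_{\expo_\gamma(v)}\Riem(M)$. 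Thus $\expo_\gamma$ is an injective local diffeomorphism, hence a diffeomorphism onto its open image $W := \expo_\gamma(\mathcal{U}_\gamma) \subset \Riem(M)$.

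It remains to check that $W$ is genuinely an open neighborhood of $\gamma$. Since $0_\gamma \in \mathcal{U}_\gamma$ and $\expo_\gamma(0_\gamma) = \gamma$ (the exponential map sends the zero vector at $\gamma$ to $\gamma$ itself), we have $\gamma \in W$; and $W$ is open because $\expo_\gamma$ is an open map, being a local diffeomorphism. Therefore $W$ is the desired open neighborhood of $\gamma$ in $\Riem(M)$ over which the exponential map is a diffeomorphism.

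I do not anticipate a serious obstacle here, as the statement is essentially an unwinding of the theorem: the only point requiring mild care is the passage from the joint diffeomorphism property of $(\pi,\expo)$ on a neighborhood of the zero-section to the fiberwise diffeomorphism property of $\expo_\gamma$ on a neighborhood of $0_\gamma$. The subtlety worth stating explicitly is that the neighborhood $\mathcal{U}$ of the zero-section need not intersect the fiber $T_\gamma\Riem(M)$ in a ball of uniform radius, but this does not matter: $\mathcal{U}_\gamma = \mathcal{U}\cap T_\gamma\Riem(M)$ is open in the fiber and contains $0_\gamma$, which is all that is needed. In the infinite-dimensional Fréchet setting one cannot invoke a classical inverse function theorem to produce such a local diffeomorphism from scratch, but here that difficulty is already absorbed into Theorem~\ref{t: Exponential map of Riem(M) is a local diffeomorphism}, so the corollary follows formally.
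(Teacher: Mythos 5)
Your deduction is the intended one---the paper states this corollary as an immediate consequence of Theorem~\ref{t: Exponential map of Riem(M) is a local diffeomorphism} with no separate proof, and restricting $(\pi,\expo)$ to the fiber $T_\gamma\Riem(M)$ is exactly the right move. One step, however, is phrased in a way that does not survive the Fr\'echet setting: you pass from ``the derivative of $\expo_\gamma$ at each point of $\mathcal{U}_\gamma$ is an isomorphism'' to ``$\expo_\gamma$ is an injective local diffeomorphism,'' which is an appeal to the inverse function theorem---precisely the tool that is unavailable for general Fr\'echet manifolds, as the paper remarks and as you yourself acknowledge at the end, even though your derivative argument still leans on it. The gap is closed without any pointwise derivative computation: since $(\pi,\expo)$ maps $\mathcal{U}$ diffeomorphically onto the open set $\mathcal{V}$, every point $(\gamma,h)\in\mathcal{V}$ has a unique preimage $v\in\mathcal{U}$ with $\pi(v)=\gamma$, so the fiber $\mathcal{U}_\gamma=\mathcal{U}\cap T_\gamma\Riem(M)$ is carried bijectively onto $\mathcal{V}\cap\bigl(\{\gamma\}\times\Riem(M)\bigr)$, which is open in the slice $\{\gamma\}\times\Riem(M)\cong\Riem(M)$. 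Hence $W:=\expo_\gamma(\mathcal{U}_\gamma)$ is an open set containing $\gamma$, and the inverse of $\expo_\gamma$ is the restriction of the smooth map $(\pi,\expo)^{-1}$ to this slice, so its smoothness comes for free. With that substitution your argument is complete and coincides with what the paper intends.
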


\subsection{Action of the diffeomorphism group}\label{ss: Action of the diffeomorphism group}

There is a smooth right action of $\Diff(M)$ on the space of Riemannian metrics $\Riem(M)$ via pullbacks.
Since we are interested in a left action, and the inverse map is smooth in $\Diff(M)$,  we consider the left action defined as
\begin{align*}
  \mu\colon \Diff(M)\times \Riem(M) \to \Riem(M),
  \quad
  (\phi,\gamma) \mapsto (\phi^{-1})^\ast\gamma. 
\end{align*}

We point out that this action is linear with respect to the Fr\'{e}chet structure.
I.e.\ for a fixed $\phi \in \Diff(M)$, the map 
\[
  \mu_\phi \colon \Gamma(S^2T^\ast M) \supset \mathcal R(M) \to \Gamma(S^2T^\ast M),
  \quad
  S \mapsto \mu(\phi,S)
\]
is a linear map.
We also point out that for a fixed element $\gamma\in \Riem(M)$, the isotropy group $\Diff(M)_\gamma$ consists of all isometries of $(M,\gamma)$.

As stated before, for $M$ compact, the group of smooth diffeomorphisms $\Diff(M)$ is a Fréchet Lie group.
We recall that there is an identification of the Lie algebra of $\Diff(M)$ with the algebra of smooth vector fields $\Gamma(TM)$.
Fix $\gamma\in \Riem(M)$, and set
\[
  \mu^{\gamma} \colon \Diff (M) \to \Riem (M),
  \quad
  \phi \mapsto \mu(\phi,\gamma).
\]
Then we have the derivative $(\mu^{\gamma})_{\ast} \colon \Gamma(TM) \to T_\gamma\Riem(M)$.
Furthermore, for a vector field $X \in \Gamma(TM)$ denote the flow of $X$ at time $t$ by $\phi^X_t \colon M \to M$.
In these terms, the derivative is given by
\[
  \left(\mu^{\gamma}\right)_\ast (X)
  = \frac{d}{dt}(\phi^{X}_{-t})^{\ast}\gamma
  = \mathcal{L}_{-X}(\gamma).
\]
\begin{remark}
Recall 
 that the Riemannian metric $\gamma$ induces a Riemannian metric on the bundle of $1$-forms (see \cite{AndrewsHopper}), which for $\omega_1,\omega_2\in \Omega^1(M)$ we denote by $\gamma(\omega_1,\omega_2 )$, as well.

If $X_1$ and $X_2$ are the dual vector fields of the $1$-forms $\omega_1$ and $\omega_2$, respectively, we have 
\[
  \gamma(\omega_1,\omega_2 ) = \gamma(X_1,X_2).
\]
With this, for the Riemannian structure $\sigma$ introduced in Section~\ref{ss: Riemannian structure on Riem(M)} we have  the following (cf.\ \cite{Blair2000})
\[
  \sigma(\mathcal{L}_X(\gamma), S)_\gamma
  = -2 \int_M (\nabla_i S^{ij}) X_j\dop\vol(\gamma)
  = 2\gamma(\omega,\dive S).
\]
Here $\dive S$ is the divergence of $S$ with respect to $\gamma$.
Thus a symmetric $2$-tensor is perpendicular to the orbit of $\Diff(M)$ at $\gamma$, if and only if, the divergence of $S$ vanishes.
By the work of Berger and Ebin in \cite{BergerEbin1969}, the tangent space $\Gamma(S^2T^\ast M)$ of $\Riem (M)$ at $\gamma$ splits as 
\begin{equation}
  \label{eq:beger-ebin-splitting}
  \Gamma(S^2T^\ast M) \cong T_\gamma \Diff(M)(\gamma) \oplus \kernel(\dive)  
\end{equation}
From the discussion above, this decomposition is compatible with the inner product $\sigma_\gamma$ and we denote $\nu_{\gamma}\Diff(M)(\gamma) := \kernel(\dive)$.
\end{remark}

\begin{remark}
The metric $\sigma$ defined in section~\ref{ss: Riemannian structure on Riem(M)} is invariant under the action of $\Diff(M)$ on $\Gamma(S^2T^\ast M)$ via pull-backs.
This follows from the fact that the action of $\Diff(M)$ is linear (see \cite[p.~20]{Ebin1968}).
\end{remark}

%

Also it was proven by Subramaniam in his Ph.D.\ thesis (\cite{Subramaniam}), that the action of $\Diff(M)$ on $\Riem(M)$ is proper for a compact action.
For the sake of completeness we provide a sketch of the proof here.

\begin{thm}\label{t: Action of Diff on Riem is proper for a compact manifold}
  Let $M$ be a closed smooth manifold.
  Then the action of $\Diff(M)$ on $\Riem(M)$ is proper.
\end{thm}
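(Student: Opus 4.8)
The plan is to verify the sequential criterion for properness furnished by Proposition~\ref{prop: second characterization of proper action}. Its hypotheses are met: by Remark~\ref{R: Diff(M) is metrizable via a complete-left-invariant metric} the group $\Diff(M)$ carries a complete left-invariant metric inducing its topology, and $\Riem(M)$, being an open subset of the Fréchet manifold $\Gamma(S^2T^\ast M)$ with the compact-open $C^\infty$-topology, is metrizable. Thus it suffices to fix $\gamma\in\Riem(M)$ together with a sequence $\{\phi_n\}\subset\Diff(M)$ such that $\phi_n\cdot\gamma=(\phi_n^{-1})^\ast\gamma$ converges to $\gamma$ in the $C^\infty$-topology, and to produce a subsequence of $\{\phi_n\}$ converging in $\Diff(M)$.

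Writing $\psi_n:=\phi_n^{-1}$, the assumption reads $\psi_n^\ast\gamma\to\gamma$ in $C^\infty$, and since inversion is continuous on $\Diff(M)$ it is enough to extract a convergent subsequence of $\{\psi_n\}$. First I would extract uniform geometric bounds from the low-order convergence: in any fixed finite atlas the identity $(\psi_n^\ast\gamma)_{ij}=(\gamma_{kl}\circ\psi_n)\,\partial_i\psi_n^k\,\partial_j\psi_n^l$ together with the $C^1$-closeness of $\psi_n^\ast\gamma$ to the nondegenerate metric $\gamma$ forces two-sided bounds on $d\psi_n$; hence the $\psi_n$ are uniformly bi-Lipschitz on the compact manifold $M$. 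By the Arzelà–Ascoli theorem a subsequence $\psi_{n_k}$ converges in $C^0$ to a bi-Lipschitz map $\psi\colon M\to M$. Since $\mathrm{length}_\gamma(\psi_{n_k}\circ c)=\mathrm{length}_{\psi_{n_k}^\ast\gamma}(c)\to\mathrm{length}_\gamma(c)$ for every curve $c$, the limit $\psi$ preserves the distance $d_\gamma$; being a distance-preserving self-map of the compact space $(M,d_\gamma)$ it is a bijective isometry, and by the theorem of Myers–Steenrod \cite{MeyersSteenrod1939} it is a smooth diffeomorphism with $\psi\in\Iso(\gamma)=\Diff(M)_\gamma$.

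It remains to upgrade this $C^0$-convergence to convergence in the $C^\infty$-topology, which I expect to be the main obstacle. Here I would first normalize by composing with the (smooth) isometry $\psi$: replacing $\psi_n$ by $\psi^{-1}\circ\psi_n$ leaves $(\psi^{-1}\circ\psi_n)^\ast\gamma=\psi_n^\ast\gamma\to\gamma$ unchanged, because $\psi$ is a $\gamma$-isometry, while the new sequence converges to $\mathrm{id}_M$ in $C^0$. One may therefore assume $\psi_{n_k}\to\mathrm{id}_M$ in $C^0$ and argue locally. The idea is to bootstrap regularity from the defining equation $\psi_n^\ast\gamma=h_n$ with $h_n\to\gamma$ in $C^\infty$: passing to $\gamma$-harmonic coordinates, in which the components of $\gamma$ satisfy uniformly elliptic equations, turns the metric-compatibility relation into an elliptic system for the components of $\psi_n$, so that a uniform $C^{m}$-bound improves to a uniform $C^{m+1}$-bound via interior elliptic estimates, the forcing term being controlled by the $C^\infty$-convergence of $h_n$. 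A diagonal argument over the derivative order then yields a further subsequence with $\psi_{n_k}\to\psi$ in $C^\infty$, whence $\phi_{n_k}=\psi_{n_k}^{-1}\to\psi^{-1}$ in $\Diff(M)$ by continuity of inversion. This is exactly the convergent subsequence demanded by Proposition~\ref{prop: second characterization of proper action}, and properness of the action follows.
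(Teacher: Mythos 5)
Your reduction to the sequential criterion of Proposition~\ref{prop: second characterization of proper action} via Remark~\ref{R: Diff(M) is metrizable via a complete-left-invariant metric} is exactly the paper's starting point, but from there you take a genuinely different route. The paper (following Subramaniam) extracts convergence of the points $f_n(p_\lambda)$ and of the differentials $(f_n)_\ast V^j_\lambda$ at the centers of finitely many normal neighborhoods, writes the candidate limit explicitly as $f_\lambda(q)=\exp^\gamma_{q_\lambda}\big(\sum_j a_{\lambda j}W^j_\lambda\big)$ using that isometries intertwine exponential maps, checks injectivity and surjectivity of the glued limit by hand, and upgrades $C^1$- to $C^\infty$-convergence by induction through the transformation law for Christoffel symbols. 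You instead get $C^0$-precompactness from uniform bi-Lipschitz bounds and Arzel\`a--Ascoli, identify the limit as a smooth element of $\Iso(\gamma)$ in one stroke via distance preservation and Myers--Steenrod (so bijectivity and smoothness of the limit come for free), and then bootstrap by elliptic theory. Your approach is cleaner at the identification stage but defers more to external machinery at the regularity stage; the paper's is longer but entirely elementary, using only exponential maps and Christoffel symbols. Two points to tighten in your sketch: (a) the length functional is only lower semicontinuous under $C^0$-convergence, so argue directly with distances, namely $d_\gamma(\psi_{n_k}(p),\psi_{n_k}(q))=d_{\psi_{n_k}^\ast\gamma}(p,q)\to d_\gamma(p,q)$; and (b) the first-order relation $\psi_n^\ast\gamma=h_n$ is not itself an elliptic system --- what produces one is that an isometry $(M,h_n)\to(M,\gamma)$ is a harmonic map, so that $\Delta_{h_n}\psi_n^k=-h_n^{ij}\,\Gamma^k_{ab}(\gamma)\,\partial_i\psi_n^a\,\partial_j\psi_n^b$ with coefficients converging in $C^\infty$; since $\gamma$ is smooth you do not actually need harmonic coordinates, only the uniform Lipschitz bound to start the bootstrap.
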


\begin{proof}[{{Proof (see \cite[p.68ff]{Subramaniam})}}]

We recall from Remark~\ref{R: Diff(M) is metrizable via a complete-left-invariant metric} that  $\Diff(M)$ has a complete left-invariant metric.
Therefore, by  Proposition~\ref{prop: second characterization of proper action}, the condition of being proper is equivalent to the following statement: If we consider a sequence $\left(f_n\right)_{n\in\N}$ of diffeomorphisms of $M$ such that for some fixed Riemannian metric $\gamma$, the sequence of Riemannian metrics $\left(f_n^\ast\gamma\right)_{n\in \N}$ converges to $\gamma$ with respect to the $C^\infty$-topology in $\mathcal R(M)$, then there exists a subsequence $\left(f_{n_{k}} \right)_{k\in\N}$ converging with respect to the $C^\infty$-topology in $\Diff(M)$.
  
  We will prove that this statement holds in two steps.
  First we show that with respect to a cover there exists a subsequence converging on each neighborhood with respect to the $C^1$-topology.
  Second, we will show that the limit functions agree on the overlaps of the neighborhoods, and that the subsequence converges with respect to the $C^\infty$-topology.

  \begin{lemma}
    Let $\{U_{\lambda}\}_{1 \leq \lambda \leq N}$ be an open cover of $M$ by normal coordinates.
    Then there exists a subsequence $(f_{n_k})_{k\in\N}$ of $(f_n)_{n \in \N}$ converging over each $U_{\lambda}$ with respect to the $C^1$-topology.
  \end{lemma}

  Let $m$ denote the dimension of $M$.
  Consider an open cover of $M$ by normal coordinates $U_\lambda$ with $1 \leq \lambda \leq N$, each one centered at some point $p_\lambda\in M$.
  Choose an orthonormal basis $\{V_{\lambda}^1,\ldots,V_{\lambda}^m\}$ of $T_{p_\lambda}M$ with respect to the metric $\gamma$.
  Since $f^\ast_n\gamma$ converges uniformly to $\gamma$, given $K>1$ we have for all $\lambda$, $j$, and for sufficiently large $n$
  \[
    \left\lVert V_\lambda^j\right\rVert_{f_n^\ast\gamma} = \left\rVert (f_n)_\ast(V_\lambda^j)\right\rVert_\gamma< K
  \]
  This implies that  the vectors $V_\lambda^j$ are contained in the disk bundle $DM_{K}$, which has as fiber the disk of radius $K$ with respect to each metric $f^\ast_n\gamma$.
  We note that the total space $DM_K$ is compact since $M$ is compact.
  We now proceed to construct the subsequence $(f_{n_k})_{k\in \N}$.
  First we consider the sequences of points $(f_n(p_{\lambda}))_{n\in\N}$.
  We begin by considering the point $p_1$.
  From the compactness of $M$ we can find a convergent subsequence $(f_{n_{k_1}})_{k_1 \in \N}$ of functions of $(f_{n})_{n\in\N}$, such that when we evaluate them at $p_1$ we have a convergent sequence.
  Denote the limit point $q_1$.
  We then evaluate the functions in $f_{n_{k_1}}$ at $p_2$, and use the compactness of $M$ to obtain a new subsequence $(f_{n_{k_2}})$ of $(f_{n_{k_1}})$, with limit $q_2$.
  We do this finitely many times to obtain a subsequence $(f_{n_{k_N}})_{k_N\in\N}$ of $(f_n)_{n\in\N}$ such that for all $\lambda$ the sequence $(f_{n_{k_N}}(p_{\lambda}))_{k_N\in\N}$ converges to $q_{\lambda}$.

  We apply the same process to the subsequence $(f_{n_{k_N}})_{k_N\in\N}$, using the compactness of $DM_k$, but now with the all the bases $\{V_{\lambda}^1,\ldots,V_{\lambda}^m\}$ and indices $\lambda$, to obtain a subsequence $(f_{n_k})_{k\in \N}$ such that for each index $\lambda$, the sequence $(f_{n_k}(p_{\lambda}))$ converges to $q_{\lambda}$ in $M$, and for each index $1\leqslant j\leqslant m$, the sequence $((f_{n_k})_\ast(V_{\lambda}^j))$ converges to $W_{\lambda}^j\in T_{q_{\lambda}}M$ in $TM$.

  From now on we consider only the subsequence $(f_{n_k})_{k\in\N}$, and write it as $(f_n)_{n\in\N}$.

  So far we have considered arbitrary normal neighborhoods.
  We now discuss which should be the neighborhoods we should consider.
  Since we have that the $\gamma$-norm of $(f_n^\ast)_\ast(V_{\lambda}^j)$ is less than $K$, the limit vectors $W_{\lambda}^j$ have $\gamma$-norm less than $2K$.
  We will assume from now on that the normal neighborhoods $U_{\lambda}$ are normal balls of radius $\delta<2K$ centered at the points $p_{\lambda}$.

  Denote by $d_n$ the metric induced on $M$ by the Riemannian metric $f^\ast_n\gamma$, and by $d$ the metric induced by $\gamma$.
  By definition, since $f^\ast_n\gamma$ converges to $\gamma$ with the respect to the $C^\infty$-topology, we have that the metric functions $d_n$  converges pointwise to $d$.
  This implies that if $q$ is contained in the ball of radius $\delta$ centered at $p$ with respect to $d$, then for a given $\varepsilon$, and $n$ large enough, $q$ is contained in the $\delta + \varepsilon$ ball centered at $p$ with respect to $d_n$.
  Furthermore, from the energy functional the minimizing geodesic between $p$ and $q$ with respect to $f^\ast_n \gamma$ converges to the minimizing geodesic with respect to $\gamma$.
  This also implies that the injectivity radius of $f^\ast_n\gamma$ converges to the injectivity radius of $\gamma$.
  These observations imply that for a fixed $p$ and $\varepsilon$, if $q$ is contained in the normal ball of radius $\delta$ centered at $p$, then for large $n$ we have that $q$ is contained in the normal ball of radius $\delta + \varepsilon$ centered at $p$.

  Take $q \in U_{\lambda}$.
  In particular we can write 
  \[
    q = \expo_{p_{\lambda}}^\gamma\left(\sum_j a_{\lambda\, j}V_{\lambda}^j\right).
  \] 
  Then for large $k$ we have that $q$ is in the normal ball of radius $2\delta$.
  Thus we can write
  \[
    q = \expo_{p_{\lambda}}^\gamma\left(\sum_j a_{n\lambda\, j}V_{\lambda}^j\right).
  \]

  Recall that the minimizing geodesics $\alpha_n$ joining $p_{\lambda}$ to $q_{\lambda}$ with respect to $f^\ast_n \gamma$ converge to the minimizing geodesic $\alpha$ joining $q$ to $p$ with respect to $\gamma$.
  Since the vectors $\sum_j a_{n \lambda\, j}V_{\lambda}^j$ are the tangent vectors to the geodesics $\alpha_n$ at $p_{\lambda}$, and $\sum_{j}a_{\lambda\,j} V_{\lambda}^j$ is the tangent vector to $\alpha$ at $p_{\lambda}$, then for each $\lambda$ and $j$ we have that the coefficient $a_{n\lambda\,j}$ converges to $a_{\lambda\,j}$. 

  We now define the limit function $f_{\lambda} \colon U_{\lambda} \to M$.
  For $q \in U_{\lambda}$ with $q = \expo_{p_{\lambda}}^\gamma\left(\sum_{j}a_{\lambda\, j}V_{\lambda}^j\right)$ we set
  \[
    f_{\lambda}(q) = \expo_{q_{\lambda}}^\gamma\left(\sum_j a_{\lambda\, j}W_{\lambda}^j\right).
  \]
  Observe that this function is smooth over $U_{\lambda}$.
  Using the fact that  $f_n$ is an isometry from $(M,f^\ast_n\gamma)$ to $(M,\gamma)$, and that   $q = \expo_{p_{\lambda}}^{f^\ast_n\gamma}\left(\sum_{j}a_{n\lambda\, j}V_{\lambda}^j\right)$, we have:
  \[
    f_n(q) = (f_n\circ \expo_{p_{\lambda}}^{f^\ast_n\gamma})\left(\sum_{j}a_{n\lambda\, j}V_{\lambda}^j\right) = \expo_{f_n(p_{\lambda})}^\gamma\left(\sum_{j}a_{n\lambda\, j}(f_n)_\ast\left(V_{\lambda}^j\right)\right).
  \]
  Thus the sequence $(f_n(q))_{n\in\N}$ converges to $\expo_{q_{\lambda}}^\gamma\left(\sum_j a_{\lambda\, j}W_{\lambda}^j\right) = f_{\lambda}(q)$ over $U_{\lambda}$ with respect to the $C^1$-topology.

  \begin{lemma}
    The limit functions $f_{\lambda}$ and $f_{\mu}$ agree on the overlaps $U_{\lambda} \cap U_{\mu}$ for all $\lambda, \mu$ and thus define a diffeomorphism $f$ on $M$.
    Moreover, there exists a subsquence of $(f_n)_{n \in \N}$ converging to $f$ with respect to the $C^\infty$-topology.
  \end{lemma}


  Consider $q\in U_{\lambda}\cap U_j$.
  On one side we have that the sequence $(f_n(q))_{n_\in\N}$ converges to $f_{\lambda}(q)$ in $M$.
  On the other side we have  that the sequence $(f_n(q))_{n_\in\N}$ converges to $f_j(q)$ in $M$.
  Thus $f_{\lambda}(q) = f_j(q)$.
  Since the limits $f_{\lambda}$ agree on the overlaps of the open cover $\{U_{\lambda}\}$ of $M$, then there is a global well defined function $f\colon M\to M$.
  This function $f$ is onto, since it is an open map, of a compact space to a connected Hausdorff space.
  It is also injective: This is true by construction, for the restriction of $f$ to each open set $U_{\lambda}$.
  Consider $d_\gamma$ the metric on $M$ induced by $\gamma$.
  From the compactness of $M$, it follows that there exists $L>0$ such that if $d_\gamma(p,q)<L$, then $p$ and $q$ lie in some $U_{\lambda}$.
  Take $p\neq q$ in $M$ and assume that $d_\gamma(p,q) >L$.
  We now consider $d_{f_n^\ast\gamma}$.
  Since $f_n$ is an isometry between $(M,d_{f_n^\ast\gamma})$ and $(M,\gamma)$, we have that $d_{f_n^\ast\gamma}(p,q)>L$.
  Since we have that $(f^\ast_n\gamma)_{n\in\N}$ converges to $\gamma$ uniformly over $M$, $(f_n(p))_{n\in\N}$ and $(f_n(q))_{n\in\N}$ converge  to $f(p)$ and $f(q)$ respectively, then $d_\gamma(f(p),f(q))\geqslant L$.
  In particular $f(p)\neq f(q)$.
  Thus $f$ is injective.

  Since by construction $f$ restricted to each $U_{\lambda}$ is smooth, then $f$ is a diffeomorphism of $M$.
  We will show that the sequence $(f_n)_{n\in\N}$ converges to $f$ with respect to the $C^\infty$-topology.
  To do this, we prove by induction that the sequence $(f_n)_{n\in\N}$ converges to $f$ with respect to the $C^k$-topology, for all $k\in \N$.
  The basis of induction is $k=1$.
  The statement holds true by construction.
  Next we assume that the sequence  $(f_n)_{n\in\N}$ converges to $f$ with respect to the $C^k$-topology.
  This implies that the first partial derivatives of the functions $f_n$  and $f_n^{-1}$ converge to the first derivatives of $f$ and $f^{-1}$ with respect to the $C^{k-1}$-topology.
  Let $\Gamma_n$ and $\Gamma$ denote the Christoffel symbols of $f_n^\ast\gamma$ and $\gamma$ respectively.
  Since $f^\ast_n\gamma$ is converging uniformly to $\gamma$ with respect to the $C^\infty$-topology, then for all indices $r,s,t$ we have that ${\Gamma_{n\,}}^{r}_{st}$ converges to $\Gamma^r_{st}$ with respect to the $C^\infty$-topology.
  Since each $f_n$ is a diffeomorphism, from the transformation law under change of variable, 
  \[
    {\Gamma_{n\,}}_{k\ell}^{j} =\sum_{r,s,t} \frac{\partial(f_n^{-1})^j}{\partial x^r}\frac{\partial (f_n)^s}{\partial x^k}\frac{\partial(f_n)^t}{\partial x^\ell}\Gamma^r_{st}
    + \sum_r \frac{\partial (f_n^{-1})^j}{\partial x^r}\frac{\partial^2(f_n)^r}{\partial x^k \partial x^\ell},
  \]
  we see that the second partial derivatives of $f_n$ must converges to the second partial derivatives $f$ with respect to the $C^{k-1}$-topology.
  This implies that the sequence $(f_n)_{n\in\N}$ converges to the $f$ with respect to the $C^{k+1}$-topology.
\end{proof}

\section{Proofs of the \autoref{thm:slice-thm} and \autoref{thm:tubular-nbhd-thm}}\label{s:Proof of slice theorem}

We begin by noting that given a slice by the \autoref{thm:slice-thm} and properness of the $\Diff(M)$ action by Theorem~\ref{t: Action of Diff on Riem is proper for a compact manifold}, the proof of \autoref{thm:tubular-nbhd-thm} is entirely analogous to the proof of Theorem~\ref{t: tubular neighborhood G-invariant metric space} in the Fréchet setting.

%
%
%
%
%
%
Our starting point for the proof of the slice theorem is the  Riemannian metric $\sigma$ on $\Riem (M)$ presented in section~\ref{ss: Riemannian structure on Riem(M)}, which is 
defined in local coordinates as,
\[
  \sigma(\alpha,\beta)_\gamma = \int_M \mathrm{tr}\big(\gamma^{-1}\alpha\gamma^{-1}\beta \big)\dop\vol(\gamma).
\]

We fix a Riemannian metric $\gamma\in \Riem(M)$.
Using the metric $\sigma$, and the fact that the orbit $\Diff(M)(\gamma)$ is closed, we split the tangent space of $\Riem (M)$ at $\gamma$ into two (infinite-dimensional) tangent subspaces: one tangent to the orbit $\Diff(M)(\gamma)$, and the normal complement $\mathcal{V}_\gamma \Diff(M)(\gamma)$ with respect to $\sigma_\gamma$ (see \eqref{eq:beger-ebin-splitting}).

\begin{remark}
  Recall that the space normal to the orbit at $\gamma$ is given by the symmetric $2$-forms with $0$ divergence with respect to $\gamma$ (see \cite{Blair2000, BergerEbin1969}).
\end{remark}

We will now follow the proof of Theorem~\ref{t: classical slice theorem} (the classical Slice theorem) to show the existence of slices for the action of $\Diff(M)$ on $\Riem(M)$.

Thus we need to show that  for 
the exponential map of $\sigma$,  is invariant under the action of $\Diff(M)$. Recall by Theorem~\ref{t: Exponential map of Riem(M) is a local diffeomorphism} that this exponential map  is a local diffeomorphism onto some open subset $U\subset \Riem  (M)$ around $\gamma$.

\begin{lemma}\label{L: invariance of exponential map of G}
  The exponential map of $\sigma$ is invariant under the action of $\Diff (M)$. 
\end{lemma}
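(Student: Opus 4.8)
The goal is to show that the exponential map of the weak Riemannian metric $\sigma$ commutes with the $\Diff(M)$-action; concretely, for any $\phi\in\Diff(M)$, any $\gamma\in\Riem(M)$, and any $S\in T_\gamma\Riem(M)$ in the domain of $\expo$, one has
\[
  \phi\cdot\expo_\gamma(S) \;=\; \expo_{\phi\cdot\gamma}\bigl((\mu_\phi)_\ast S\bigr),
\]
where $\mu_\phi\colon\Riem(M)\to\Riem(M)$ is the (linear) action map $\gamma\mapsto(\phi^{-1})^\ast\gamma$. The conceptual reason this must hold is that $\sigma$ is $\Diff(M)$-invariant (recorded in the remark following the Berger--Ebin splitting), and exponential maps of a metric are preserved by isometries. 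So the entire lemma reduces to the slogan \emph{isometries map geodesics to geodesics and intertwine exponential maps}, applied in the Fr\'echet setting.

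\emph{First} I would record precisely that each $\mu_\phi$ is an isometry of $(\Riem(M),\sigma)$: since the action is linear in the Fr\'echet sense, $(\mu_\phi)_\ast=\mu_\phi$ on each tangent space under the identification $T_\gamma\Riem(M)\cong\Gamma(S^2T^\ast M)$, and the invariance remark gives $\sigma_{\phi\cdot\gamma}\bigl(\mu_\phi S,\mu_\phi T\bigr)=\sigma_\gamma(S,T)$. \emph{Second}, because the Levi--Civita connection of $\sigma$ is characterized by the Koszul formula (stated in Section~\ref{ss: Riemannian structure on Riem(M)}) purely in terms of $\sigma$, Lie brackets of vector fields, and directional derivatives, an isometry automatically intertwines the connection: $\mu_\phi$ pushes $\sigma$-geodesics to $\sigma$-geodesics. \emph{Third}, given $S$ in the domain of $\expo$, let $\lambda(t)=\expo_\gamma(tS)$ be the $\sigma$-geodesic with $\lambda(0)=\gamma$, $\dot\lambda(0)=S$. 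Then $t\mapsto\mu_\phi(\lambda(t))$ is a $\sigma$-geodesic starting at $\phi\cdot\gamma$ with initial velocity $(\mu_\phi)_\ast S$; by uniqueness of geodesics with prescribed initial data it equals $\expo_{\phi\cdot\gamma}\bigl(t(\mu_\phi)_\ast S\bigr)$, and evaluating at $t=1$ yields the claim.

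\emph{The main obstacle} is justifying the geodesic machinery in the weak Riemannian (Fr\'echet) setting, where a priori neither $\nabla_XY$ nor $\expo$ need exist. However, the paper has already installed exactly what is needed: by the cited work of Gil-Medrano--Michor and Theorem~\ref{t: Exponential map of Riem(M) is a local diffeomorphism}, the exponential map of $\sigma$ exists on a neighborhood of the zero section and is a local diffeomorphism, and $\sigma$-geodesics with given initial data are unique (the Koszul formula forces uniqueness of $\nabla$ wherever it exists). Thus I need only verify that the \emph{formal} argument---``$\mu_\phi$ preserves the connection, hence maps geodesics to geodesics''---is legitimate: it suffices that $\mu_\phi$ preserve $\sigma$ and that the covariant derivative be defined along the relevant curves, both of which hold on the image of $\expo$. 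The care required is essentially bookkeeping: one must check that the defining ODE (or variational characterization) for geodesics transforms covariantly under the smooth map $\mu_\phi$, and that initial velocities match, i.e. $\frac{d}{dt}\big|_{t=0}\mu_\phi(\lambda(t))=(\mu_\phi)_\ast\dot\lambda(0)$, which is just the chain rule for Fr\'echet-differentiable maps.
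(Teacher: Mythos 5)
Your argument is essentially identical to the paper's: both invoke the $\Diff(M)$-invariance of $\sigma$ together with the uniqueness of the Levi--Civita connection via the Koszul formula to conclude that the action preserves geodesics and hence intertwines exponential maps, with existence of $\expo$ supplied by the Gil-Medrano--Michor results. Your version simply spells out the bookkeeping (linearity of $\mu_\phi$, matching of initial velocities) that the paper leaves implicit.
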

\begin{proof}
  From the Koszul formula we have uniqueness for the Levi-Civita connection of $\sigma$.
  Since $\sigma$ is a $\Diff(M)$-invariant Riemannian metric, then by the uniqueness, the Levi-Civita connection is $\Diff(M)$-invariant.
  This implies that the action of  $\Diff(M)$ respects  geodesics.
  This implies that the exponential map is  $\Diff (M)$ invariant.
  See \cite{FreedGroisser1989, KrieglMichor, Gil-MedranoMichor1991}.
\end{proof}

With this lemma we can proceed to present the proof of the main theorem.

\begin{proof}[Proof of the \autoref{thm:slice-thm}]
We observe that we may consider 
the open \linebreak neighborhood $V$ to be a small ball around the origin in $\mathcal{V}_\gamma\Diff (M)(\gamma)$.
We set $S_\gamma = \expo_\gamma (V)$. We claim this is the desired slice.

We prove that for $S_\gamma$ defined above, point (i)  holds.
First, since the metric $\sigma$ is $\Diff(M)$-invariant, and $\Iso (\gamma)$ is a closed Lie subgroup, then $\sigma$ is also $\Iso (\gamma)$-invariant.
Recall that  $\eta\in S_\gamma$ is, by definition, contained in a geodesic $\lambda$, containing $\gamma$, and which is normal to the orbit $\Diff(M)(\gamma)$.
For $f\in \Iso (\gamma)$, from Lemma~\ref{L: invariance of exponential map of G}, we have that $f\cdot\lambda$ is a geodesic through $f\cdot\gamma = \gamma$.
Furthermore, from the invariance of $\sigma$ under the $\Iso(\gamma)$-action, and the fact that the geodesic $\lambda$ is normal to the orbit, then $f\cdot\lambda$ is normal to the orbit.
Also from the invariance of $\sigma$, it follows that the distance of $f\cdot \eta$ to the orbit $\Diff(M)(\gamma)$ is the same as the distance of $\eta$ to  the orbit $\Diff(M)(\gamma)$.
Thus by the definition of $S_\gamma$, we have that $f\cdot \eta$ lies in $S_\gamma$.
I.e.\ $f\cdot S_\gamma \subset S_\gamma$.
This proves point (i).

Point (ii) follows from the fact, due to Theorem~\ref{t: Exponential map of Riem(M) is a local diffeomorphism}, that the normal exponential map on the neighborhood $V\subset \nu_\gamma \Diff(M)(\gamma)$ of a point in the zero section is injective, when $V$ is small enough.
We now state how to show point (ii). Take $f\in \Diff (M)$, and $\eta, \psi \in S_\gamma$ such that $f\cdot \eta = \psi$ (i.e.\ assume $f\cdot S_\gamma \cap S_\gamma\neq \emptyset$).
Furthermore assume that $\psi$ and $\eta$ are at a distance less than $\delta>0$ from $\gamma$.
Since the action of $\Diff(M)$ is by isometries with respect to $\sigma$, it follows that the distance between $f\cdot \gamma$ and  $f\cdot \eta = \psi$ is  the same distance as the one between $\gamma$ and $\eta$.
Thus by the triangle inequality we have that the distance between $\gamma$ and $f\cdot\gamma$ is less than $2\delta$.
By taking $\delta$ small enough this implies that $f\cdot\gamma$ and $\gamma$ are contained in $V$.
Since $\eta$ is in  $S_\gamma$, then by the invariance of the Riemannian metric $\sigma$, and the invariance of the exponential map of $\sigma$, $\psi$ is contained in $S_{f\cdot \gamma}$.
From the fact that $\psi$ is contained in $S_\gamma$, it follows that $\psi$ is in  the image of $\expo_\gamma$.
On the other hand, $\psi$ lies in $S_{f\cdot\gamma}$, thus in the image of $\expo_{f\cdot \gamma}$.
Since the exponential map is injective on $V$, we conclude that $\gamma = f\cdot \gamma$.

The proof of point (iii) follows verbatim as in the proof of Theorem~\ref{t: classical slice theorem}, using the fact that the map $\rho\colon \Riem (M) \to\Riem(M)/ \Diff(M) $ is a principal fiber bundle.
\end{proof}
%

\begin{remark}
  The condition of properness for the action of $\Diff(M)$ on $\Riem(M)$ is necessary for the proof of point (iii), as we need that the orbit $\Diff(M)(\gamma)$ is homeomorphic to the homogeneous space $\Diff(M)/\Iso(\gamma)$.
\end{remark}
\section{Consequences of the Slice Theorem}\label{s: Consequences of the Slice Theorem}

For a given compact manifold $M$, several interesting consequences follow from the \autoref{thm:slice-thm} for the action of the diffeomorphism group  on the\linebreak space of Riemannian metrics.

The first one is \autoref{thm:tubular-nbhd-thm}, giving a description of a neighborhood of an orbit of $\Diff(M)$ inside $\Riem(M)$ up to homeomorphism.
This result follows from theorem~\ref{t: tubular neighborhood G-invariant metric space}.

Another interesting consequence is the study of how the isometries groups of Riemannian metrics which are close to each other in $\Riem(M)$ are related.

\begin{proposition}[Theorem~8.1 in \cite{Ebin1968}]
  Let $\gamma\in \Riem (M)$ be an arbitrary fixed Riemannian metric on $M$.
  Then there exists an open neighborhood $V$ of the identity in  $\Diff (M)$, and an open neighborhood $N$ of $\gamma$ in $\Riem (M)$, such that for any $\sigma\in N$, there exists an $f$ in $V$ such that:
  \[
    f^{-1}\Iso (\sigma) f\subset \Iso (\gamma).
  \]
\end{proposition}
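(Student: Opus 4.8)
The plan is to read the conclusion directly off the slice structure furnished by the \autoref{thm:slice-thm}. Let $S_\gamma$ be the slice through $\gamma$, and let $U \subset \Diff(M)/\Iso(\gamma)$, the cross-section $\chi \colon U \to \Diff(M)$, and the homeomorphism $F \colon U \times S_\gamma \to \Riem(M)$, $F(u,s) = \chi(u)\cdot s$, onto an open neighborhood of $\gamma$ be as in point (iii) of \ref{dfn:slice}. After normalizing $\chi$ so that it sends the identity coset $e\,\Iso(\gamma)$ to $\mathrm{Id}_M$, we have $F^{-1}(\gamma) = (e\,\Iso(\gamma),\gamma)$. The idea is then: given a metric $\sigma$ near $\gamma$, use $F$ to write $\sigma = \chi(u)\cdot s$ with $s \in S_\gamma$, set $f := \chi(u)$, transport $\Iso(\sigma)$ into $\Iso(s)$ by conjugation by $f$, and finally use the fact that isotropy groups can only shrink inside a slice to land inside $\Iso(\gamma)$.

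The first ingredient I would isolate is the general slice fact that $\Iso(s) \subset \Iso(\gamma)$ for every $s \in S_\gamma$: if $g \in \Iso(s) = \Diff(M)_s$, then $g\cdot s = s \in g\cdot S_\gamma \cap S_\gamma$, so this intersection is nonempty and property (ii) of \ref{dfn:slice} forces $g \in \Diff(M)_\gamma = \Iso(\gamma)$. The second ingredient is the conjugation identity. Writing $\sigma = f\cdot s$ with $f = \chi(u)$, for any $\phi \in \Iso(\sigma)$ we compute $(f^{-1}\phi f)\cdot s = f^{-1}\cdot(\phi\cdot(f\cdot s)) = f^{-1}\cdot(\phi\cdot\sigma) = f^{-1}\cdot\sigma = s$, using $\phi\cdot\sigma = \sigma$ and $f\cdot s = \sigma$. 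Hence $f^{-1}\phi f \in \Iso(s)$, and combining the two ingredients gives $f^{-1}\Iso(\sigma)f \subset \Iso(s) \subset \Iso(\gamma)$, which is precisely the desired inclusion.

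It remains to arrange the neighborhoods so that $f$ lies in a prescribed neighborhood $V$ of the identity in $\Diff(M)$. Set $N := F(U \times S_\gamma)$ and define the assignment $\sigma \mapsto f(\sigma) := \chi(\mathrm{pr}_1 F^{-1}(\sigma))$, where $\mathrm{pr}_1$ projects $U \times S_\gamma$ onto $U$. Since $F^{-1}$ and $\chi$ are continuous and $f(\gamma) = \chi(e\,\Iso(\gamma)) = \mathrm{Id}_M$, the preimage of $V$ is open and contains $\gamma$; replacing $N$ by $N \cap f^{-1}(V)$ yields a neighborhood on which $f(\sigma) \in V$ for all $\sigma$. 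The only genuine care is needed in this last step: one must verify that $\chi$ may be normalized to send $e\,\Iso(\gamma)$ to $\mathrm{Id}_M$ (which follows since any other value lies in $\Iso(\gamma)$ and may be removed by right multiplication, using slice property (i) to see $F$ remains a homeomorphism), and that $\chi\circ\mathrm{pr}_1\circ F^{-1}$ is continuous into the Fréchet Lie group $\Diff(M)$, guaranteed by \ref{cor:diff-is-a-frechet-lie-group}. No infinite-dimensional subtlety beyond the slice theorem itself enters, so I expect this final bookkeeping to be the only, and rather mild, obstacle.
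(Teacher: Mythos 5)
Your proof is correct and takes essentially the same route as the paper's: write $\sigma = \chi(u)\cdot s$ via the slice homeomorphism $F$, conjugate $\Iso(\sigma)$ into $\Iso(s)$, and apply slice property (ii) to get $\Iso(s)\subset\Iso(\gamma)$. Your bookkeeping for forcing $f\in V$ (normalizing $\chi$ at the identity coset and using continuity of $\sigma\mapsto\chi(\mathrm{pr}_1 F^{-1}(\sigma))$) is if anything slightly more careful than the paper's, which instead shrinks $U$ to $U'=U\cap\pi(V')$ and has a harmless slip over whether $f$ is $\chi(u')$ or its inverse.
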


\begin{proof}
  Take $V'$ any open neighborhood of the identity in $\Diff (M)$.
  Consider $U$ the open neighborhood of the identity coset in $\Diff (M) / \Iso (\gamma)$ given by the slice theorem.
  We recall that there exists a continuous cross-section $\chi\colon U\to \Diff(M)$ of the projection map $\pi\colon \Diff(M)\to \Diff(M) / \Iso(\gamma)$.

  Set $U' = U\cap \pi(V)$, which is open in $\Diff(M) /\Iso (\gamma)$.
  Setting $V = \pi^{-1}(U')$ we get an open neighborhood of the identity in $\Diff(M)$.
  We point out, that $\chi(U')\subset V$.
  We consider the homeomorphism $F\colon U\times S_\gamma \to \Riem(M)$, and set $N = F(U'\times S_\gamma)$.

  For $\sigma\in N$, it follows from the definition of $F$, that 
  \[
    \sigma = \chi(u')s,
  \]
  for some $u'\in U'$, and $s\in S_\gamma$.
  This implies that $s = \chi(u')^{-1}\sigma$.
  We set $f = \chi(u')^{-1}$.

  Consider $h\in \Iso (\sigma)$ arbitrary.
  Then by our choices, $(f^{-1}\circ h\circ f)(s) = s$.
  From point (ii) of the Slice Theorem it follows that $f^{-1}\circ h\circ f \in \Iso(\gamma)$.
  Since $h$ is arbitrary, it follows that
  \[
    f^{-1} \Iso (\sigma) f \subset \Iso (\gamma).
  \]
\end{proof} 

Let $\mathcal R_{\text{triv}}(M)$ be the collection of all Riemannian metrics on $M$ with trivial isometry group.
The following Corollary about $\mathcal R_{\text{triv}}(M)$ follows from the previous Proposition.

\begin{cor}[Corollary~8.2 in \cite{Ebin1968}, Theorem~1 in \cite{Kim1987}]
  $\mathcal R_{\text{triv}}(M)$ is open in $\Riem (M)$.
\end{cor}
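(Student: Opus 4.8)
The plan is to derive openness of $\mathcal R_{\text{triv}}(M)$ directly from the preceding Proposition, which says that metrics near $\gamma$ have isometry groups that, after conjugation by a diffeomorphism near the identity, embed into $\Iso(\gamma)$. The key observation is that when $\gamma \in \mathcal R_{\text{triv}}(M)$, we have $\Iso(\gamma) = \{\mathrm{Id}_M\}$, so the conclusion $f^{-1}\Iso(\sigma)f \subset \Iso(\gamma)$ forces $\Iso(\sigma)$ to be trivial as well. Thus the neighborhood $N$ produced by the Proposition is entirely contained in $\mathcal R_{\text{triv}}(M)$, witnessing openness at $\gamma$.

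First I would fix an arbitrary $\gamma \in \mathcal R_{\text{triv}}(M)$, so that by definition $\Iso(\gamma) = \{\mathrm{Id}_M\}$. Next I would apply the previous Proposition to obtain an open neighborhood $V$ of the identity in $\Diff(M)$ and an open neighborhood $N$ of $\gamma$ in $\Riem(M)$ with the stated conjugation property. Then for any $\sigma \in N$, the Proposition supplies some $f \in V$ with
\[
  f^{-1}\Iso(\sigma)f \subset \Iso(\gamma) = \{\mathrm{Id}_M\}.
\]
Since conjugation $h \mapsto f^{-1}hf$ is a group automorphism of $\Diff(M)$, it is injective, so the preimage of the trivial group is trivial; hence $\Iso(\sigma) = \{\mathrm{Id}_M\}$, which means $\sigma \in \mathcal R_{\text{triv}}(M)$. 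Therefore $N \subset \mathcal R_{\text{triv}}(M)$, and since $\sigma \in N$ was arbitrary and $N$ is an open neighborhood of $\gamma$, every point of $\mathcal R_{\text{triv}}(M)$ has an open neighborhood inside the set.

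There is essentially no hard part here, as the statement is a near-immediate corollary: the entire analytic and geometric content — the slice theorem, properness of the action, and the local comparison of isometry groups — has already been absorbed into the Proposition. The only point requiring a moment's care is the logical step that $f^{-1}\Iso(\sigma)f \subset \{\mathrm{Id}_M\}$ implies $\Iso(\sigma) = \{\mathrm{Id}_M\}$; this is simply the injectivity of conjugation, and I would state it explicitly to make the argument self-contained. Assembling these local neighborhoods over all $\gamma \in \mathcal R_{\text{triv}}(M)$ then exhibits $\mathcal R_{\text{triv}}(M)$ as a union of open sets, completing the proof.
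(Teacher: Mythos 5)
Your proposal is correct and follows essentially the same route as the paper: apply the preceding Proposition at a metric $\gamma$ with trivial isometry group, note that $f^{-1}\Iso(\sigma)f \subset \{\mathrm{Id}_M\}$ forces $\Iso(\sigma)$ to be trivial, and conclude that the neighborhood $N$ lies in $\mathcal R_{\text{triv}}(M)$. Your explicit remark that conjugation is an injective automorphism is a small clarification the paper leaves implicit, but the argument is otherwise identical.
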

\begin{proof}
  Take $\gamma\in \mathcal R_{\text{triv}}(M)$.
  Then, by the previous proposition there exists an open neighborhood $N$ of $\gamma$ in $\Riem (M)$ and $V$ an open neighborhood of the identity in $\Diff (M)$, such that for any $\sigma$ in $N$, we have
  \[
    f^{-1}\Iso (\sigma) f \subset \Iso (\gamma) =\{Id_M\},
  \]
  for some $f\in V$.
  This implies that $\Iso (\sigma)$ is trivial.
  Since $\sigma\in N$ is arbitrary, we conclude that $N\subset \mathcal R_{\text{triv}}(M)$.
  Thus $\mathcal R_{\text{triv}}(M)$ is open in $\mathcal R(M)$.
\end{proof}

Furthermore, given $\gamma$ an arbitrary Riemannian metric on a compact $n$-manifold, we can deform it in such a way that the absolute value of the Ricci curvature increases only on a small neighborhood $U$ of $M$. Then by increasing the  curvature at $n$ points inside $U$, we get a deformation of $\gamma$ which is at one point very asymmetrical. I.e. we are able to deform $\gamma$ into a Riemannian metric in $\mathcal R_{\text{triv}}(M)$.
\begin{thm}[Proposition~8.3 in \cite{Ebin1968}]
  The open set $\mathcal R_{\text{triv}}(M)$ is dense in $\Riem (M)$.
\end{thm}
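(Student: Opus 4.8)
The plan is to prove density by an explicit small perturbation: given any $\gamma\in\Riem(M)$ and any $C^\infty$-neighborhood of it, I will produce a metric $g$ in that neighborhood whose isometry group $\Iso(g)$ is trivial. The whole argument rests on the rigidity of isometries (Myers--Steenrod, \cite{MeyersSteenrod1939}): an isometry $\phi$ of $(M,g)$ is completely determined by the pair $(\phi(p),d\phi_p)$ at a single point $p$; in particular, if $\phi(p)=p$ and $d\phi_p=\mathrm{id}$, then $\phi=\mathrm{id}$. The strategy is therefore to engineer, at one chosen point $p$, enough curvature asymmetry to force every isometry both to fix $p$ and to act trivially on $T_pM$.

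Concretely, I would choose a point $p$ at which the scalar curvature of $\gamma$ attains its maximum, and then build a perturbation $h$, $C^\infty$-small and supported in a small ball $B$ around $p$, so that $g=\gamma+h$ satisfies three pointwise conditions: (a) $p$ is the unique strict global maximum of the scalar curvature $\mathrm{scal}_g$; (b) the Ricci endomorphism $\Ric_g(p)$ has $n$ pairwise distinct eigenvalues, with eigendirections $v_1,\dots,v_n$; and (c) the cubic form $C(X):=(\nabla_X\Ric_g)(X,X)$ satisfies $C(v_i)\neq 0$ for each $i$. Condition (a) is cheap precisely because $p$ already realizes $\sup_M\mathrm{scal}_\gamma$: a small bump raising $\mathrm{scal}_g(p)$ slightly above this supremum, peaked at $p$ and supported in $B$, leaves the scalar curvature unchanged (hence strictly smaller) outside $B$, so $p$ becomes the unique strict maximum. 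Conditions (b) and (c) are open conditions on the $2$- and $3$-jets of $g$ at $p$, and can be arranged simultaneously with (a) since the relevant components of $\Ric_g$ and $\nabla\Ric_g$ depend on essentially independent components of the metric jet. This is the step anticipated by ``increasing the curvature at $n$ points inside $U$''.

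With such a $g$ in hand, the triviality of $\Iso(g)$ then follows in three steps. First, any isometry $\phi$ preserves $\mathrm{scal}_g$, hence maps its unique maximum point to itself, giving $\phi(p)=p$. Second, $A:=d\phi_p\in O(T_pM)$ preserves $\Ric_g(p)$ and $(\nabla\Ric_g)(p)$; since $A$ commutes with $\Ric_g(p)$ and the eigenvalues are distinct, $A$ preserves each eigenline, so $Av_i=\pm v_i$, and the $A$-invariance of the cubic form together with $C(-v_i)=-C(v_i)\neq 0$ forces $Av_i=+v_i$ for all $i$, whence $A=\mathrm{id}$. Third, rigidity gives $\phi=\mathrm{id}$. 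Thus $\Iso(g)=\{\mathrm{id}\}$, i.e.\ $g\in\mathcal R_{\text{triv}}(M)$, and since $g$ is $C^\infty$-close to $\gamma$, the set $\mathcal R_{\text{triv}}(M)$ is dense.

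The main obstacle is the construction in the second paragraph: verifying that a single $C^\infty$-small, compactly supported perturbation can meet (a), (b) and (c) at once. The delicate point is that one cannot make $p$ a global maximum by a small perturbation unless $p$ already lies in the maximum set of $\mathrm{scal}_\gamma$ --- this is exactly why $p$ must be chosen there, and why $p$ is distinguished by a \emph{strict maximum} of an invariant rather than by some prescribed large curvature value (which would require a non-small deformation). Granting this choice, the remaining work is the local computation, in normal coordinates at $p$, showing that the components of $\Ric_g(p)$ and $(\nabla\Ric_g)(p)$ entering (b) and (c) can be prescribed independently of the single scalar constraint coming from (a); this is conceptually routine but is where the computational content of the proof lies.
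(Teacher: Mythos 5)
The paper does not actually carry out a proof of this density statement: it only records Ebin's one-sentence strategy (deform the metric so the curvature is ``very asymmetrical'' at one point, using $n$ auxiliary points inside a small neighborhood) and cites \cite{Ebin1968}. In Ebin's version the $n$ extra points are given distinguished curvature values so that an isometry must fix each of them, hence fix the initial velocities of the minimal geodesics from $p$ to them, hence have identity differential at $p$. Your proposal replaces these $n$ auxiliary points by conditions on the $2$- and $3$-jet of the curvature at the single point $p$: distinct eigenvalues of the Ricci endomorphism plus nonvanishing of the odd cubic form $(\nabla_X\Ric)(X,X)$ on the eigendirections. Both routes funnel into the same Myers--Steenrod rigidity statement, and the deduction in your third paragraph ($\phi(p)=p$ from the unique maximum of $\operatorname{scal}$; $Av_i=\pm v_i$ from distinct eigenvalues; $Av_i=+v_i$ from $C(-v_i)=-C(v_i)\neq 0$; then $\phi=\mathrm{id}$) is correct, assuming $M$ connected. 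Your version is more localized and arguably cleaner, at the cost of the jet computation; you also correctly identify the one non-obvious constraint, namely that $p$ must be chosen in the maximum set of $\operatorname{scal}_\gamma$ so that condition (a) can be met by a small perturbation.

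There is, however, one concrete failure: condition (b) cannot be achieved when $\dime M=2$. On a surface $\Ric=\tfrac{1}{2}\operatorname{scal}\cdot g$, so the Ricci endomorphism is a multiple of the identity for \emph{every} metric and never has distinct eigenvalues; no perturbation, small or large, produces them. Your argument therefore proves nothing for surfaces, where the statement is nonetheless true. The mechanism can be repaired there by replacing $\Ric(p)$ with the Hessian of the Gauss curvature at its maximum point (generically with distinct eigenvalues) and the cubic form with a third-derivative invariant of $K$, or by falling back on Ebin's $n$-point device. (Note also that the statement itself implicitly requires $\dime M\geq 2$: on $S^1$ every metric is round and $\mathcal R_{\text{triv}}(S^1)=\emptyset$.) Apart from this, the only unexecuted step is the actual construction of a single $C^\infty$-small, compactly supported perturbation meeting (a)--(c) simultaneously; since (b) and (c) are open conditions on the $2$- and $3$-jet at $p$, which can be adjusted by arbitrarily small compactly supported perturbations without disturbing the strict-maximum property (the conditions being stable under small motion of the maximum point), this is a legitimate reduction for $\dime M\geq 3$ rather than a gap, but it is where all the remaining work lives.
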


\begin{remark}
The proof of the previous Theorem does not use the Slice Theorem. It only depends on the topology we consider in $\Riem (M)$.
\end{remark}

This means that a generic metric has trivial isometry group.
This agrees with the notion of the principal stratum in the setting of smooth proper Lie group actions on smooth finite-dimensional manifolds.
We consider the projection map $p\colon \Riem(M)\to \Mod (M)$ from the space of Riemannian metrics to the moduli space.
Then by continuity, the set $p(\mathcal R_{\text{triv}}(M))$ is open dense in $\Mod (M)$.
Furthermore, from the invariance of the metric $\sigma$ with respect to $\Diff(M)$, we can define a Riemannian metric on $p(\mathcal R_{\text{triv}}(M))$.
Thus we obtain the following Corollary:

\begin{cor}
  For a compact manifold $M$, an open dense set of $\Mod(M)$ admits a Fréchet, and a  Riemannian structure.
\end{cor}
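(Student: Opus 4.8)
The plan is to take the open dense set to be $p(\mathcal R_{\text{triv}}(M))$, which was just shown to be open and dense in $\Mod(M)$, and to manufacture both structures from the slices through metrics with trivial isometry group. The decisive simplification is that for $\gamma\in\mathcal R_{\text{triv}}(M)$ the isotropy group $\Diff(M)_\gamma=\Iso(\gamma)$ is trivial, so that the slice $S_\gamma$ is transverse to the orbit and meets each nearby orbit in exactly one point.

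First I would fix $\gamma\in\mathcal R_{\text{triv}}(M)$ and invoke the \autoref{thm:slice-thm} to produce $S_\gamma$. Since $\Iso(\gamma)=\{\mathrm{Id}_M\}$, property \ref{item:slice-ii} of \autoref{dfn:slice} forces $f=\mathrm{Id}_M$ whenever $f\cdot S_\gamma\cap S_\gamma\neq\emptyset$; hence distinct points of $S_\gamma$ lie in distinct $\Diff(M)$-orbits and $p|_{S_\gamma}$ is injective. Because $\Diff(M)_\gamma$ is trivial, the neighborhood $U$ in property \ref{item:slice-iii} of \autoref{dfn:slice} is an open neighborhood of the identity in $\Diff(M)$ itself, and the homeomorphism $F\colon U\times S_\gamma\to\Riem(M)$ satisfies $p\circ F(u,s)=p(s)$. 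Using that $p$ is an open map, this shows that $p(S_\gamma)$ is an open neighborhood of $[\gamma]$ and that $p|_{S_\gamma}\colon S_\gamma\to p(S_\gamma)$ is a homeomorphism, whose inverse is continuous through the slice factor of $F^{-1}$. The maps $(p|_{S_\gamma})^{-1}$, as $\gamma$ ranges over $\mathcal R_{\text{triv}}(M)$, are the candidate charts; each model space $S_\gamma=\expo_\gamma(V)$ is a Fréchet submanifold of $\Riem(M)$ by Theorem~\ref{t: Exponential map of Riem(M) is a local diffeomorphism}.

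Next I would check that the transition maps are smooth in the Fréchet sense. On an overlap $p(S_{\gamma_1})\cap p(S_{\gamma_2})$, the transition sends $s_1\in S_{\gamma_1}$ to the unique $s_2\in S_{\gamma_2}$ with $[s_1]=[s_2]$, namely $s_2=\chi_2(u)^{-1}\cdot s_1$, where $(u,s_2)=F_2^{-1}(s_1)$ is read off from the slice data at $\gamma_2$. Writing this as a composite of the cross-section $\chi_2$ from property \ref{item:slice-iii} of \autoref{dfn:slice}, the inversion in $\Diff(M)$, and the smooth action $\mu$, one exhibits the transition as a composition of smooth maps of Fréchet manifolds, so the charts form a Fréchet atlas and endow $p(\mathcal R_{\text{triv}}(M))$ with a Fréchet manifold structure. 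For the Riemannian structure I would restrict the $\Diff(M)$-invariant $L^2$-metric $\sigma$ to the tangent spaces of the slices; at $\gamma$ the tangent space of $S_\gamma$ is the normal complement $\nu_\gamma\Diff(M)(\gamma)$ of \eqref{eq:beger-ebin-splitting}, and invariance of $\sigma$ guarantees that the restricted metric does not depend on the chosen representative of a class, so the local metrics glue to a well-defined weak Riemannian structure on $p(\mathcal R_{\text{triv}}(M))$.

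The main obstacle I expect is precisely the smoothness of the transition maps: because the inverse function theorem fails for general Fréchet manifolds, one cannot conclude that $F$ or $F_2^{-1}$ is smooth merely from it being a homeomorphism, and the argument must instead extract smoothness directly from the construction of $S_\gamma$ via $\expo_\gamma$, from the principal-bundle structure of $\rho\colon\Diff(M)\to\Diff(M)/\Iso(\gamma)$ underlying the section $\chi$, and from the smoothness of $\mu$. Once this is secured, the openness and density of $p(\mathcal R_{\text{triv}}(M))$ in $\Mod(M)$ complete the proof.
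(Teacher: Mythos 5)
Your proposal follows essentially the same route as the paper: the paper likewise takes the open dense set to be $p(\mathcal R_{\text{triv}}(M))$, obtains the local Fréchet structure from the slices through metrics with trivial isometry group, and defines the Riemannian metric by descending the $\Diff(M)$-invariant metric $\sigma$. The paper asserts this almost without proof, so your elaboration --- in particular your flagging of the smoothness of the transition maps as the genuinely delicate point in the Fréchet category --- is a faithful, more detailed account of the same argument.
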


Furthermore, the slice theorem allows us to lift paths from the moduli space, to $\Riem(M)$.

\begin{proposition}\label{prop: lift of paths}
Consider a path $\lambda\colon [a,b]\to \Mod(M)$, then for $\gamma \in \Riem(M)$ there exists a path $\tilde{\lambda}\colon [a,b]\to \Riem(M)$ with $\widetilde{\lambda}(a)=\gamma$ and $\pi\circ\widetilde{\lambda} = \lambda$.
\end{proposition}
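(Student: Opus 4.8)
The plan is to lift the path \emph{locally}, using the slices furnished by the \autoref{thm:slice-thm}, and then to patch the local lifts together by a compactness argument. Throughout I assume, as is implicit in the statement, that $\gamma$ is a lift of the initial point, i.e.\ $\pi(\gamma)=\lambda(a)$, since otherwise no lift with $\tilde\lambda(a)=\gamma$ can project onto $\lambda$.

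First I would record the local model of $\pi\colon\Riem(M)\to\Mod(M)$, which is an open map because orbit projections are open. For any $\delta\in\Riem(M)$ with slice $S_\delta$ and (compact) isotropy group $K=\Iso(\delta)$, property \ref{item:slice-iii} gives a homeomorphism $F\colon U\times S_\delta\to N$ onto an open neighborhood $N$ of $\delta$; since $F(u,s)$ lies in the $\Diff(M)$-orbit of $s$, we have $\pi(N)=\pi(S_\delta)$, so $\pi(S_\delta)$ is open in $\Mod(M)$, and properties \ref{item:slice-i}--\ref{item:slice-ii} show that $\pi|_{S_\delta}$ factors through a homeomorphism $S_\delta/K\cong\pi(S_\delta)$. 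Thus near every point of $\Mod(M)$ the projection is modeled on the orbit map $q\colon S_\delta\to S_\delta/K$ of the compact Lie group $K$ acting on the slice.

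Next I would set up the covering and lift inductively. Since $[a,b]$ is compact and $\lambda$ is continuous, the open cover $\{\lambda^{-1}(\pi(S_\delta))\}_\delta$ of $[a,b]$ admits a Lebesgue number, yielding a partition $a=t_0<t_1<\dots<t_n=b$ with each $\lambda([t_k,t_{k+1}])$ contained in a single $\pi(S_{\delta_k})$. Assuming $\tilde\lambda$ is already defined and continuous on $[a,t_k]$ with $g_k:=\tilde\lambda(t_k)$ and $\pi(g_k)=\lambda(t_k)\in\pi(S_{\delta_k})$, there is $\phi\in\Diff(M)$ with $\phi\cdot g_k\in S_{\delta_k}$. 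By the $\Diff(M)$-invariance of the exponential map (Lemma~\ref{L: invariance of exponential map of G}) the translate $S':=\phi^{-1}\cdot S_{\delta_k}$ is again a slice (through $\phi^{-1}\cdot\delta_k$), it contains $g_k$, and $\pi(S')=\pi(S_{\delta_k})\supseteq\lambda([t_k,t_{k+1}])$. It then suffices to lift $\lambda|_{[t_k,t_{k+1}]}$ through the orbit map $q\colon S'\to S'/K'=\pi(S')$ starting at $g_k$ and to append the result to $\tilde\lambda$; the new piece begins at $g_k=\tilde\lambda(t_k)$, so continuity is preserved across $t_k$, and $\pi\circ\tilde\lambda=\lambda$ holds by construction.

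The crux, and the step I expect to be the main obstacle, is the \emph{vertical} lifting through the orbit map $q\colon S'\to S'/K'$ of the compact Lie group $K'$ acting on the slice. Since this action need not be free, $q$ is not a fiber bundle and admits no continuous section over an entire neighborhood, so one cannot simply invert $q$. What one uses instead is that orbit maps of compact (hence proper) group actions have the path-lifting property: this can be obtained by iterating the finite-dimensional slice picture of Section~\ref{ss: classical slice theorem} for $K'$, reducing across orbit types to the free case where $q$ is locally a principal bundle with local sections, while invoking the compactness of the orbits to ensure the lift remains continuous as it crosses the lower-dimensional strata. Granting this property, the inductive construction runs to completion and produces the desired continuous lift $\tilde\lambda$ with $\tilde\lambda(a)=\gamma$ and $\pi\circ\tilde\lambda=\lambda$.
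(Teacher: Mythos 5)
Your proposal follows essentially the same route as the paper: both use the \autoref{thm:slice-thm} and \autoref{thm:tubular-nbhd-thm} to identify $\pi$ locally with the orbit map $S_\delta\to S_\delta/\Iso(\delta)$ of a compact Lie group acting on a slice, cover $\lambda([a,b])$ by finitely many such charts via compactness of $[a,b]$, and reduce to the path-lifting property for compact Lie group actions. The only difference is that the paper cites Bredon (Chapter~II, Theorem~6.2) for that last property where you sketch its proof, and you are more explicit about matching initial points at the subdivision times.
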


\begin{proof}
By \autoref{thm:tubular-nbhd-thm}, for any $t\in [a,b]$ there exists an open neighborhood of $\lambda(t)$ homeomorphic to $S_{\gamma_t}/\Iso(\gamma_t)$, where $\gamma_t$ is any metric such that $\pi(\gamma_t) = \lambda(t)$. Since $[a,b]$ is compact, there exist finitely many open neighborhoods $S_{\gamma_{t_i}}/\Iso(\gamma_{t_i})$ covering $\lambda[a,b]$.  We recall that $\Iso(\gamma_t)$ is a finite dimensional Lie group. Then from \cite[Chapter~II, Theorem~6.2]{Bredon}, there exists such a lift over each open cover.
\end{proof}

\bibliographystyle{amsalpha}
\nocite{DiezRudolph2019}
\bibliography{Bibliography}

\providecommand{\bysame}{\leavevmode\hbox to3em{\hrulefill}\thinspace}
\providecommand{\MR}{\relax\ifhmode\unskip\space\fi MR }
\providecommand{\MRhref}[2]{%
  \href{http://www.ams.org/mathscinet-getitem?mr=#1}{#2}
}
\providecommand{\href}[2]{#2}
\begin{thebibliography}{GMM91}

\bibitem[AB15]{AlexandrinoBettiol}
Marcos~M. Alexandrino and Renato~G. Bettiol, \emph{Lie groups and geometric
  aspects of isometric actions}, Springer, Cham, 2015.

\bibitem[AH11]{AndrewsHopper}
Ben Andrews and Christopher Hopper, \emph{The {R}icci flow in {R}iemannian
  geometry}, Lecture Notes in Mathematics, vol. 2011, Springer, Heidelberg,
  2011.

\bibitem[BE69]{BergerEbin1969}
M.~Berger and D.~Ebin, \emph{Some decompositions of the space of symmetric
  tensors on a {R}iemannian manifold}, J. Differential Geometry \textbf{3}
  (1969), 379--392.

\bibitem[Bla00]{Blair2000}
David~E. Blair, \emph{Spaces of metrics and curvature functionals}, Handbook of
  differential geometry, {V}ol. {I}, North-Holland, Amsterdam, 2000,
  pp.~153--185.

\bibitem[Bre72]{Bredon}
Glen~E. Bredon, \emph{Introduction to compact transformation groups}, Academic
  Press, New York-London, 1972, Pure and Applied Mathematics, Vol. 46.

\bibitem[Cla09]{Clarke}
Brian Clarke, \emph{The completion of the {M}anifold of {R}iemannian {M}etrics
  with {R}espect to its $l^2$ {M}etric}, Ph.D. thesis, University of Leipzig,
  2009.

\bibitem[DR19]{DiezRudolph2019}
Tobias Diez and Gerd Rudolph, \emph{Slice theorem and orbit type stratification
  in infinite dimensions}, Differential Geom. Appl. \textbf{65} (2019),
  176--211.

\bibitem[Ebi70]{Ebin1968}
David~G. Ebin, \emph{The manifold of {R}iemannian metrics}, Global {A}nalysis
  ({P}roc. {S}ympos. {P}ure {M}ath., {V}ol. {XV}, {B}erkeley, {C}alif., 1968),
  Amer. Math. Soc., Providence, R.I., 1970, pp.~11--40.

\bibitem[FG89]{FreedGroisser1989}
Daniel~S. Freed and David Groisser, \emph{The basic geometry of the manifold of
  {R}iemannian metrics and of its quotient by the diffeomorphism group},
  Michigan Math. J. \textbf{36} (1989), no.~3, 323--344.

\bibitem[FS16]{FarrellSocar2016}
F.~Thomas Farrell and Gangotryi Socar, \emph{Teichmüller space of negatively
  curved metrics on complex hyperbolic manifolds is not contractible},
  \href{https://arxiv.org/pdf/1611.03764.pdf}{arXiv:1611.03764[math.DG]}
  (2016).

\bibitem[GMM91]{Gil-MedranoMichor1991}
Olga Gil-Medrano and Peter~W. Michor, \emph{The {R}iemannian manifold of all
  {R}iemannian metrics}, Quart. J. Math. Oxford Ser. (2) \textbf{42} (1991),
  no.~166, 183--202.

\bibitem[Gro02]{Grove2000}
Karsten Grove, \emph{Geometry of, and via, symmetries}, Conformal, {R}iemannian
  and {L}agrangian geometry ({K}noxville, {TN}, 2000), Univ. Lecture Ser.,
  vol.~27, Amer. Math. Soc., Providence, RI, 2002, pp.~31--53.

\bibitem[Gro17]{Grove2018}
K.~Grove, \emph{A panoramic glimpse of manifolds with sectional curvature
  bounded from below}, Algebra i Analiz \textbf{29} (2017), no.~1, 7--48,
  Reprinted in St. Petersburg Math. J. {{\bf{2}}9} (2018), no. 1, 3--31.

\bibitem[Ham82]{Hamilton}
Richard~S. Hamilton, \emph{The inverse function theorem of {N}ash and {M}oser},
  Bull. Amer. Math. Soc. (N.S.) \textbf{7} (1982), no.~1, 65--222.

\bibitem[Hir94]{Hirsch}
Morris~W. Hirsch, \emph{Differential topology}, Graduate Texts in Mathematics,
  vol.~33, Springer-Verlag, New York, 1994.

\bibitem[Kan05]{Kankaanrinta2005}
Marja Kankaanrinta, \emph{Proper smooth {$G$}-manifolds have complete
  {$G$}-invariant {R}iemannian metrics}, Topology Appl. \textbf{153} (2005),
  no.~4, 610--619.

\bibitem[Kim87]{Kim1987}
Young~Wook Kim, \emph{Semicontinuity of compact group actions on compact
  differentiable manifolds}, Arch. Math. (Basel) \textbf{49} (1987), no.~5,
  450--455.

\bibitem[KM97]{KrieglMichor}
Andreas Kriegl and Peter~W. Michor, \emph{The convenient setting of global
  analysis}, Mathematical Surveys and Monographs, vol.~53, American
  Mathematical Society, Providence, RI, 1997.

\bibitem[Kob72]{Kobayashi}
Shoshichi Kobayashi, \emph{Transformation groups in differential geometry},
  Springer-Verlag, New York-Heidelberg, 1972, Ergebnisse der Mathematik und
  ihrer Grenzgebiete, Band 70. \MR{0355886}

\bibitem[KW09]{KhesinWendt}
Boris Khesin and Robert Wendt, \emph{The geometry of infinite-dimensional
  groups}, Ergebnisse der Mathematik und ihrer Grenzgebiete. 3. Folge. A Series
  of Modern Surveys in Mathematics [Results in Mathematics and Related Areas.
  3rd Series. A Series of Modern Surveys in Mathematics], vol.~51,
  Springer-Verlag, Berlin, 2009.

\bibitem[MS39]{MeyersSteenrod1939}
S.~B. Myers and N.~E. Steenrod, \emph{The group of isometries of a {R}iemannian
  manifold}, Ann. of Math. (2) \textbf{40} (1939), no.~2, 400--416.

\bibitem[MZ55]{MontgomeryZippin}
Deane Montgomery and Leo Zippin, \emph{Topological transformation groups},
  Interscience Publishers, New York-London, 1955.

\bibitem[Pal61]{Palais1961}
Richard~S. Palais, \emph{On the existence of slices for actions of non-compact
  {L}ie groups}, Ann. of Math. (2) \textbf{73} (1961), 295--323.

\bibitem[Sub85]{Subramaniam}
T.~N. Subramaniam, \emph{Slices for the actions of smooth tame lie groups
  (frechet, gauge, diffeomorphism, nash-moser inverse function theorem).},
  Ph.D. thesis, Brandeis University, 1985.

\bibitem[TW15]{TuschmannWraith}
Wilderich Tuschmann and David~J. Wraith, \emph{Moduli spaces of {R}iemannian
  metrics}, Oberwolfach Seminars, vol.~46, Birkh\"{a}user Verlag, Basel, 2015.

\end{thebibliography}

\end{document}